\documentclass[pdflatex,final,10pt]{article}
\usepackage[dvips]{epsfig}
\usepackage{graphics}
\usepackage{latexsym}
\usepackage{verbatim}
\usepackage{amsmath}
\usepackage{amssymb}
\usepackage{comment}
\usepackage{units}
\usepackage{amsfonts}
\usepackage{bm}
\usepackage{graphicx}
\usepackage{subcaption}
\usepackage{placeins}
\usepackage{booktabs}
\usepackage{xcolor}
\usepackage{mathtools}
\usepackage{tabularx}
\usepackage{array}
\usepackage{booktabs}
\usepackage{soul}
\usepackage{comment}
\usepackage[]{hyperref}
\hypersetup{
    colorlinks=true,%
    filecolor=black,%
    linkcolor=black,%
    urlcolor=black
}
\usepackage{mathtools}
\graphicspath{{fig/}}	% Root directory of the pictures 
\usepackage{amsthm}

\newtheorem{proposition}{Proposition}[section]
\newtheorem{remark}{Remark}[section]
\newtheorem{lemma}{Lemma}[section]
\newtheorem{definition}{Definition}[section]
\newcommand{\Fabrice}[1]{#1}
\newcommand{\Liu}[1]{#1}
\newcommand{\Del}[1]{#1}

\newenvironment{del}
  {}
  {}

\newcommand{\IMT}{Universit\'e de Toulouse; UPS, INSA, UT1, UTM, Institut de Math\'ematiques de Toulouse,
CNRS, Institut de Math\'ematiques de Toulouse UMR 5219, F-31062 Toulouse, France; fabrice.deluzet@math.univ-toulouse.fr}
\newcommand{\HIT}{Corresponding author. School of Mathematics, Harbin Institute of Technology, 92 West Dazhi Street, Nan Gang District, Harbin, 150001, China;
yangchang@hit.edu.cn}
\newcommand{\HITlzy}{School of Mathematics, Harbin Institute of Technology, 92 West Dazhi Street, Nan Gang District, Harbin, 150001, China;
25B912034@stu.hit.edu.cn}
\newcommand{\Dollars}{This work was supported by National Natural Science Foundation of China 
grant 12371432.  Support from the "F\'ed\'eration de Fusion pour la 
Recherche par Confinement Magn\'etique" (FrFCM) in the frame of the 
project "NEMESIA: Numerical mEthods for Macroscopic models of 
magnEtized plaSmas and related anIsotropic equAtions" is also acknowledged.
  }

\begin{document}
%\headers{Multiscale methods for confined plasma fluid models}{C. Yang, F. Deluzet}
\title{An Asymptotic-Preserving Scheme for Plasma Simulation
    in Quasi-Neutral and Low-Mach-Number Regimes with Kinetic Upgrades
%\thanks{Submitted to the editors DATE.\funding{\Dollars}}}
\footnote{\Dollars}}
\author{Z. Liu\thanks{\HITlzy} \and F. Deluzet\thanks{\IMT}
\and C. Yang\thanks{\HIT}}
\newtheorem{assumption}{Assumption}
%\newsiamthm{prop}{Proposition}
%\newsiamremark{remark}{Remark}

\author{Zeyu Liu\thanks{\HITlzy} \and Fabrice Deluzet\thanks{\IMT} \and Chang Yang\thanks{\HIT}}

\hyphenation{bounda-ry rea-so-na-ble be-ha-vior pro-per-ties
cha-rac-te-ris-tic}

\maketitle

\begin{abstract}
 We propose an asymptotic-preserving micro--macro method bridging a kinetic description of electrons and a low-frequency reduced model in which the electrons are a massless, quasi-neutral fluid obeying the Boltzmann relation. Two features distinguish the construction. First, the fluid and low-Mach limits are coupled, so that the low-Mach stiffness is handled on a macroscopic system, where implicit treatment is affordable, rather than on the kinetic equations. Second, an auxiliary variable rescales the stiff force balance, turning the singular
low-Mach limit into a regular limit of the augmented system, which is shown to remain non-degenerate uniformly in the Debye length as well. This matters at the discrete level: with an iterative linear solver, the stiffness induced by the small Mach number amplifies the solver residual, so that a scheme designed to be asymptotic-preserving in its time discretization alone loses that property once the full solution chain is taken into account. The proposed scheme retains it, with no tightening of the solver tolerance as the Mach number vanishes, and admits a post-processing variant that decouples the auxiliary variable and reduces the size of the linear system. Numerical experiments spanning distinct parameter regimes confirm the analysis: standard semi-implicit schemes lose low-Mach-number equilibrium under residual amplification, whereas the proposed schemes preserve it down to round-off. 
\end{abstract}

\vspace{0.1cm}

\noindent 
{\small\sc Keywords.}  {\small Vlasov--BGK--Poisson system, micro--macro decomposition, asymptotic-preserving scheme, quasi-neutral limit, low-Mach-number limit}
%%%%%%%%%%%%%%%%%%%%%%%%%%%%%%%%%%%%%%%%%%%%%%

\tableofcontents

\section{Introduction}
Plasma dynamics couples microscopic particle transport to macroscopic collective behavior across widely separated scales. In magnetic-confinement fusion, boundary plasmas, and space plasmas, the electron distribution often departs from a local Maxwellian: weak collisions, strong fields, boundary effects, and steep gradients generate velocity-space anisotropy and higher-order moment structures that fluid closures cannot resolve. Kinetic models such as the Vlasov and Vlasov--BGK equations, coupled to self-consistent fields, describe these effects directly, and are essential in scenarios such as pedestal transport and kinetic instabilities \cite{wan2012global}, divertor sheaths and scrape-off-layer transport \cite{stangeby2000plasma}, and non-Maxwellian electrons during magnetic reconnection \cite{munoz2016nonmaxwellian}. Their direct simulation is nonetheless costly and stiff: the distribution lives in high-dimensional phase space, and the Debye length, plasma period, collision time, ion-to-electron mass ratio and transport scales coexist in a single problem, straining both particle-in-cell \cite{birdsall1991particle,hockney1988computer} and deterministic phase-space methods \cite{yang2021highly}.

In many regions of interest, however, the electrons are in fact strongly collisional, quasi-neutral, and essentially inertia-less, and are then accurately described by a much cheaper reduced model: a massless, quasi-neutral and isothermal fluid closed by the Boltzmann relation between density and potential. The aim of this work is to build a numerical method that bridges these two descriptions: the full kinetic Vlasov--BGK--Poisson system and its low-frequency isothermal massless-electron, possibly quasi-neutral limit, and lets the simulation move from one to the other where each is appropriate. Following the asymptotic-transition analysis of \cite{crestetto2020bridging}, we design a scheme that mimics, at the discrete level, the singular limits performed on the continuous equations to pass from the kinetic to the reduced model. Because these limits are singular, we rely on asymptotic-preserving (AP) methods \cite{jin1999efficient,jin2010asymptotic,degond2017asymptotic}, whose stability is not constrained by the vanishing parameters: the collision parameter (fluid limit), the Mach number (low-Mach or inertia-less limit), and the Debye length
(quasi-neutral limit) can each be sent to zero to reach the cheaper model, without adapting the discretization for stability or consistency. This continues a line of AP work on the quasi-neutral and fluid limits of BGK--Vlasov--Poisson \cite{crouseilles2016multiscale,crispel2007asymptotic,degond2010asymptotic} and on all-speed, low-Mach formulations \cite{cordier2012asymptotic,dimarco2018second}, and it uses a micro--macro decomposition \cite{lemou2008new,bennoune2008uniformly} that splits the distribution function into a Maxwellian, carrying the conserved moments and the constant electron temperature, and a microscopic perturbation orthogonal to the collision invariants.

A distinctive feature of our construction is that the fluid limit is coupled to the low-Mach (inertialess-electron) limit through the scaling $\kappa = o(M^2)$. Rather than treating the low-Mach stiffness at the kinetic level, where an implicit solve would be prohibitively expensive in phase space, this coupling relaxes the distribution to its fluid limit and transfers the stiffness onto a macroscopic system, where implicit treatment is far cheaper. The reduction is thus performed where it is affordable.

The coupled limits raise two distinct degeneracies. In the quasi-neutral limit, Poisson equation collapses to an electroneutrality constraint and no longer determines the potential; in the low-Mach limit, the inertial term of the momentum equation vanishes and velocity evolution degenerates into a force balance. Beyond these continuous degeneracies, we analyze the AP property at the fully discrete level, including the linear solve. This step is usually left out, as the literature is largely confined to one-dimensional problems where a direct solver returns residuals near machine precision and the issue never appears. With an iterative solver, by contrast, the $1/M^2$ stiffness factor in the momentum equation amplifies the residual, so that the convergence tolerance must be tightened as $M \to 0$ merely to keep the velocity update accurate: otherwise the amplified residual accumulates in time and a scheme that is AP in its time discretization alone loses that property once the full solution chain
is accounted for.

Accordingly, the contributions of this work are as follows. Starting from a dimensionless Vlasov--BGK--Poisson system, we derive an equivalent micro--macro model in which the macroscopic momentum equation is coupled to the microscopic component through a kinetic pressure correction, its second velocity moment. We then analyze the asymptotic limit of this model in the combined fluid and low-Mach regime, and identify the conditions on the two parameters: the collision parameter $\kappa$ and the Mach number $M$, coupled through $\kappa = o(M^2)$, under which it reduces to the Boltzmann equilibrium in the joint limit. Analyzing the quasi-neutral and low-Mach degeneracies separately, we show that they arise from distinct mechanisms. %
For the low-Mach regime, we introduce an augmented micro--macro formulation built on an auxiliary variable. In the original system, the balance among pressure gradient, electric force, and kinetic pressure is multiplied by a factor that grows without bound as the Mach number vanishes, making the low-Mach limit singular. The auxiliary variable is defined to rescale this stiff term, so that the low-Mach limit becomes a regular limit of the augmented system, well defined down to $M = 0$. %
The resulting scheme is asymptotic-preserving in the low-Mach limit along the entire discrete solution chain, requiring no tightening of the linear-system tolerance as $M \to 0$. Moreover, the same augmented formulation remains non-degenerate uniformly in the quasi-neutral limit. We further recast the scheme as a post-processing of a semi-implicit discretization, decoupling the auxiliary variable from the other macroscopic unknowns and thereby reducing the size of the linear system. We validate the method on three test cases chosen to span distinct parameter regimes. Linear Landau damping probes the kinetic, non-quasi-neutral regime, away from the
low-Mach limit, and checks that the AP scheme recovers a standard kinetic description. The low-Mach equilibrium test targets the opposite extreme, the singular limit corresponding to the Boltzmann equilibrium, where it isolates the residual-amplification mechanism that makes IMEX diverge while the AP and AP-post schemes stay at round-off. Plasma expansion into vacuum finally combines these regimes in a single self-consistent problem, with local breakdown of quasi-neutrality, kinetic effects, and a low-Mach bulk coexisting with a non-low-Mach extracted beam, providing a complementary and more demanding validation against a Particle-In-Cell reference.

The paper is organized as follows. Section~2 presents the nondimensionalization,
the micro--macro decomposition, the limit models, and their degeneracy
structures. Section~3 develops the semi-implicit AP discretization and its
low-Mach consistency. Section~4 reports the numerical experiments, and
Section~5 concludes.
\section{Modelling}
\subsection{Vlasov--BGK--Poisson system}
We start from the Vlasov--BGK--Poisson system for electrons,
\begin{equation} \label{1}
    \left \{
    \begin{aligned}
        &\partial_t{f_e} + v\cdot \nabla_x{f_e}-\frac{e}{m_e}(E+v \times B) \cdot \nabla_v{f_e}=\Del{\nu_{ee}(\mathcal{M}_{e}-f_e)},\\
        &-\Delta_{x}\phi=\frac{e}{\varepsilon_{0}}(n_{i}-n_e),
    \end{aligned}
    \right.
\end{equation}
%where
%\begin{equation*}
%    \mathcal{Q}_{ee}=\nu_{ee}(\mathcal{M}_{e}-f_e).
%\end{equation*}
Here $f_e(x,v,t)$ is the electron distribution function, $E$ and $B$ denote the electric and magnetic fields, respectively, and $\phi$ is the electrostatic potential, with $E=-\nabla_x\phi$. Moreover, $e>0$ is the magnitude of the elementary charge, so that the electron charge is $-e$, $m_e$ is the electron mass, and $\varepsilon_0$ is the vacuum permittivity. The \Del{source term in the kinetic equation} is the BGK collision operator describing electron--electron collisions, and $\nu_{ee}$ is the electron--electron collision frequency, given by
\begin{equation*}
    \nu_{ee} = K_0 \frac{n_{e}}{(k_{B}T_{e})^{3/2}} \frac{\sqrt{2}}{\sqrt{m_{e}}}, \quad 
    K_0 = C \left( \frac{q^2}{4\pi\varepsilon_0} \right) \ln(\Lambda),
\end{equation*}
where $C$ is a constant of order one and $\ln(\Lambda)$ is the Coulomb logarithm.

The electron Maxwellian distribution function, \Fabrice{denoted as $\mathcal{M}_{e}$ and associated with $f_e$, is defined as}
\begin{equation*}
    \mathcal{M}_{e}(x,v,t) = n_e(x,t) \left( \frac{m_e}{2\pi k_B T_e(x,t)} \right)^{\frac{D_v}{2}} \exp \left( -\frac{m_e|u_e(x,t)-v|^{2}}{2k_BT_e(x,t)} \right),
\end{equation*}
where $D_v$ is the dimension of the velocity space and $k_B$ is the Boltzmann constant. The Maxwellian parameters $n_e$, $u_e$, and $T_e$ represent the electron density, mean velocity, and temperature, respectively, \Fabrice{and are defined as the moments of the distribution function $f_e$,}
\begin{equation*}
    n_e=\int_{\Omega_v}f_e\,d v,\quad
    n_e u_e=\int_{\Omega_v}v f_e\,dv,\quad
    \frac{1}{\gamma-1}n_e k_BT_e=\frac{m_e}{2}\int_{\Omega_v}|v-u_e|^2 f_e\,dv.
\end{equation*}
The ratio of specific heats depends on the velocity dimension through $\gamma-1 = {2/D_v}$.

\subsection{Dimensionless analysis}
\Fabrice{The purpose here is to introduce dimensionless parameters related to the different physical asymptotics: the low-Mach regime, the quasi-neutral assumption, and the anisotropy induced by the magnetic field intensity. To this end, we nondimensionalize the system.} Let $T_0$ and $n_0$ be characteristic scales of electron temperature and density, $\phi_0$ and $B_0$ be characteristic scales of electric potential and magnetic field, and $x_0$ and $t_0$ be characteristic length and time scales. \Fabrice{The speed of interest, defined as $\vartheta_0 = x_0 / t_0$, represents the observation scale chosen to analyze the system. Rather than an intrinsic property of the plasma, it reflects the specific physical phenomena under consideration. Depending on the chosen space and time scales, $\vartheta_0$ can be tuned to track macroscopic fluid transport, wave propagation, or thermal particle diffusion. The microscopic velocity is naturally scaled by the electron thermal velocity, which is the intrinsic physical scale for particle agitation. Finally, the characteristic electric field is related to the typical scale of the electrostatic potential, yielding}
\begin{equation*}
    v_0^2=\frac{k_BT_0}{m_e},\quad E_0=\frac{\phi_0}{x_0},\quad \vartheta_0=\frac{x_0}{t_0}.
\end{equation*}
We use the following nondimensional variables:
\begin{equation} \label{2}
    \left\{
    \begin{aligned}
        &x^{*}=\frac{x}{x_0},\quad t^{*}=\frac{t}{t_0},\quad v^{*}=\frac{v}{v_0},\quad f_e^{*}=\frac{f_e}{n_0/(v_0)^{D_v}},\quad n_e^{*}=\frac{n_e}{n_0},\quad n_{i}^{*}=\frac{n_{i}}{n_{0}},\\
        &T_e^*=\frac{T_e}{T_0},\quad {u_e^*=\frac{u_e}{u_0}}, \quad E^{*}=\frac{E}{E_0},\quad B^{*}=\frac{B}{B_0},\quad \phi^{*}=\frac{\phi}{\phi_{0}}.
    \end{aligned}
    \right.
\end{equation}

By substituting the characteristic scales into the dimensional equations and keeping the same notations for the non-dimensional variables to simplify notation, we obtain the general dimensionless electron Vlasov--BGK--Poisson system:
\begin{equation} \label{3}
    \left \{
    \begin{aligned}
        &\frac{\vartheta_0}{v_0}\partial_{t}f_e + v\cdot\nabla_{x}f_e - \eta E + \frac{\vartheta_0}{v_0} \Omega_e  \, v\times B \cdot \nabla_{v}f_e = \frac{1}{\kappa_0} \frac{\vartheta_0}{v_0} \nu_{ee} \left(\mathcal{M}_{e}-f_e \right), \\
        &-\lambda^{2}\eta\Delta_x\phi = n_{i}-n_e,
    \end{aligned}
    \right.
\end{equation}
  \Fabrice{At this stage, no assumptions have been made regarding the observation scales or the specific magnitudes of the physical quantities. The dimensionless parameters appearing in \eqref{3} are defined as follows: $\lambda = \lambda_D / x_0$ is the dimensionless Debye length (where $\lambda_D = \sqrt{\varepsilon_0 k_B T_0 / e^2 n_0}$); $\eta = e\phi_0 / k_B T_0$ represents the electric energy normalized by the electron internal energy; $\Omega_e = t_0 e B_0 / m_e$ characterizes the number of electron cyclotron periods over the typical time $t_0$; and $\kappa_0^{-1} = t_0 \nu_{ee,0}$ corresponds to the number of electron collisions occurring during this same typical time.}

\Fabrice{To study specific physical regimes, we must carefully select the observation scales and formulate appropriate assumptions regarding the dimensionless parameters. In this context, two characteristic speeds naturally emerge: the microscopic velocity $v_0$, associated with the thermal agitation of the particles, and the macroscopic velocity $u_0$, which governs the evolution of the plasma bulk. The electron Mach number $M$ is defined as the ratio of these two quantities, $M = u_0 / v_0$. The low-Mach regime is therefore characterized by the asymptotic limit $M \to 0$. The regime under investigation is therefore related to the assumption that the plasma evolution is mainly governed by the mean velocity $u_0$ while the thermal velocity of particles is large compared to this scale. The objective there is to follow the evolution of the system without resolving the fast particle velocity hence setting $\vartheta_0= u_0$. Inserting this assumption into the dimensionless system yields}
\begin{equation} \label{5}
    \left \{
    \begin{aligned}
        &M\partial_{t}f_e+v\cdot\nabla_{x}f_e-\eta E\cdot \nabla_{v}f_e-\Omega_{e}Mv\times B\cdot \nabla_{v}f_e=\frac{M}{\kappa}\left(\nu_{ee}(\Del{\mathcal{M}_{e}}-f_e) \right),\\
        &-\lambda^{2}\eta\Delta_{x}\phi=n_{i}-n_e.
    \end{aligned}
    \right.
\end{equation}
where the dimensionless Maxwellian is
\begin{equation*}
    \mathcal{M}_{e}(x,v,t)=n_e(x,t)\left(\frac{1}{2\pi T_e(x,t)}\right)^{\frac{D_v}{2}}
    \exp\left( -\frac{|Mu_e(x,t)-v|^{2}}{2T_e(x,t)}\right),
\end{equation*}
with the dimensionless parameters defined in Table~\ref{4}.
% \begin{table}[!h]
%     \centering
%     \caption{Dimensionless parameters.} \label{4}
%     \begin{tabular}{ll}
%     \toprule
%         Dimensionless parameter & Definition \\
%         \midrule
%         $\lambda=\lambda_{D}/{x_{0}}$ & dimensionless Debye length \\
%         $M=u_0/v_0$ & electron Mach number \\
%         $\eta=(e\phi_0)/(k_BT_0)$ & ratio of electric energy to electron internal energy \\
%         $\Omega_e=(eB_0t_0)/m_e$ & number of electron cyclotron periods over the characteristic time \\
%         $\kappa^{-1}=\nu_{ee,0}t_0$ & number of electron--electron collisions over the characteristic time \\
%         \bottomrule
%     \end{tabular}
% \end{table}
\begin{table}[!ht]
    \centering
    \caption{Dimensionless parameters.}
    \label{4}
    \begin{tabularx}{\linewidth}{
        @{}
        >{\raggedright\arraybackslash}p{0.31\linewidth}
        >{\raggedright\arraybackslash}X
        @{}
    }
        \toprule
        Dimensionless parameter & Definition \\
        \midrule
        $\lambda=\lambda_D/x_0$
        & Dimensionless Debye length. \\

        $M=u_0/v_0$
        & Electron Mach number. \\

        $\eta=e\phi_0/(k_BT_0)$
        & Ratio of the electric energy to the electron internal energy. \\

        $\Omega_e=eB_0t_0/m_e$
        & Number of electron cyclotron periods over the characteristic time. \\

        $\kappa^{-1}=\nu_{ee,0}t_0$
        & Number of electron--electron collisions over the characteristic time. \\
        \bottomrule
    \end{tabularx}
\end{table}

\Fabrice{The low-Mach limit introduces severe structural stiffness into equation \eqref{5}.
Addressing this low-Mach-number stiffness directly at the kinetic level generally calls for an implicit discretization of the kinetic equation. Because of the high dimensionality of phase space, such implicit kinetic solvers are computationally demanding, and their cost becomes prohibitive in multidimensional settings. The strategy adopted here is instead to treat the low-Mach-number limit at the fluid level rather than at the kinetic level, by coupling it to the collisional (fluid) limit of the model. The gain is substantial, since an implicit discretization of a macroscopic (fluid) system is far cheaper than the corresponding implicit kinetic solver. The price to pay is a loss of accuracy in the low-Mach regime, where the kinetic corrections carried by the non-equilibrium perturbation are discarded as the system relaxes to its fluid limit.}

\Fabrice{Concretely, the low-Mach-number limit is governed by the Mach number $M$, while the relaxation toward the fluid limit is governed by the collision parameter $\kappa$. Coupling the two limits through the scaling $\kappa \sim M^2$ makes $\kappa$ vanish faster than $M$, so that the collisional relaxation rate $\sim \nu_{ee}/\kappa$ grows and the system is driven to its fluid limit at least as fast as the low-Mach degeneracy develops. In this strongly collisional, low-Mach regime, $\kappa \sim M^2 \to 0$, the dominant collision operator formally enforces $f_e \to \mathcal{M}_{e}$ and reduces the system to its fluid limit. This ordering of the two limits is what secures the stability of the numerical method in the joint limit and lets us avoid analyzing the low-Mach degeneracy directly at the kinetic level. This feature is investigated in the next section, introducing a Micro-Macro decomposition of the distribution function in order to easily recover the macroscopic model within the kinetic one.}

\Fabrice{This coupling is introduced primarily for numerical stability, and tying the low-Mach and fluid limits together is not, in general, physically justified. It nonetheless admits a reasonable modeling interpretation. The electron collision frequency increases with density and decreases with temperature, $\nu_{ee} \propto n_e T_e^{-3/2}$, whereas the low-Mach regime is naturally associated with hot plasmas, whose large sound speed lowers the Mach number. The scaling $\kappa \sim M^2$ is thus consistent with a physical setting in which the low-Mach limit is attained within a sufficiently dense plasma, where the high density sustains strong collisionality despite the high temperature. In this picture, the relevant transition connects a dense, hot, collisional region to a less dense and cooler one, as encountered, for instance, near plasma sheaths.}

\Fabrice{Beyond modeling aspects, the primary objective of the proposed methodology lies in model coupling: bridging a fluid representation in the low-Mach limit\Del{,} commonly known as the Boltzmann approximation\Del{,} with a kinetic description when the fluid reduction fails. This paper is therefore devoted to constructing such a consistent transition model.}
\subsection{Derivation of the micro--macro model}
We now introduce \Fabrice{a decomposition of the electron distribution function into a global Maxwellian, characterized by a constant macroscopic temperature $T_{e,0}$, and a kinetic deviation $g_e$, yielding:}
\begin{equation} \label{6}
    f_e=\mathcal{M}_{e}+g_{e},
\end{equation}
where
\begin{align}\label{eq:Maxwellian:adim}
     \mathcal{M}_{e}\Fabrice{(x,v,t)}=n_e(x,t)\left(\frac{1}{2\pi T_{e,0}}\right)^{\frac{D_v}{2}}
    \exp\left( -\frac{|Mu_e(x,t)-v|^{2}}{2T_{e,0}}\right).
\end{align}
By construction, $g_e$ satisfies the moment constraints
\begin{equation*}
    \left\langle
    \begin{pmatrix}
        1\\
        v
    \end{pmatrix} g_{e}
    \right\rangle
    =\int_{\Omega_v}\begin{pmatrix}1\\v\end{pmatrix} g_{e}\,dv=0.
\end{equation*}
Substituting the \Fabrice{ansatz \eqref{6} into the Vlasov equation} \eqref{5} and taking zeroth and first moments over $\Omega_v$ yields the Euler-type macroscopic equations
\begin{equation}
    \partial_t n_e+\nabla_x\cdot(n_eu_e)=0,
    \label{7}
\end{equation}

\begin{equation}
\begin{aligned}
    M^2\left(
        \partial_t(n_eu_e)
        +\nabla_x\cdot(n_eu_e\otimes u_e)
    \right)
    &+T_{e,0}\nabla_x n_e
    -\eta n_e\nabla_x\phi
    \\
    \qquad
    &+\Omega_eM^2n_eu_e\times B
    =
    -\nabla_x\cdot
    \left\langle v\otimes v\,g_e\right\rangle.
\end{aligned}
\label{8}
\end{equation}
\Fabrice{The source term in the momentum equation \eqref{8} accounts for kinetic corrections to the fluid model, including contributions stemming from the fluctuations around the reference temperature $T_{e,0}$.}

Following \cite{bennoune2008uniformly} an orthogonal basis of the \Fabrice{null space of the collision operator} is
\begin{equation} \label{9}
    \mathcal B=\left\{ \frac{1}{n_e}\mathcal{M}_{e}, \frac{(v-Mu_e)}{\sqrt{T_{e,0}}}\frac{1}{n_e}\mathcal{M}_{e} \right\}.
\end{equation}
The corresponding projection operator onto this space is
\begin{equation} \label{10}
    \Pi_{\mathcal{M}}(\varphi)=\frac{1}{n_e}\left[ \langle\varphi\rangle+\frac{(v-Mu_e)}{T_{e,0}}\cdot\langle(v-Mu_e)\varphi\rangle \right]\mathcal{M}_{e}.
\end{equation}

The following elementary properties will be used \Fabrice{for the derivation of the micro-macro set of equations.}
\begin{lemma}
Let $\mathcal{M}_e$ and $g_e$ be defined by \eqref{6}. Then
\begin{align*}
        &\Pi_{\mathcal{M}}(\mathcal{M}_{e})=\mathcal{M}_{e}, \quad \Pi_{\mathcal{M}}(\partial_t\mathcal{M}_{e})=\partial_t\mathcal{M}_{e},\quad \Pi_{\mathcal{M}}(\nabla_v\mathcal{M}_{e})=\nabla_v\mathcal{M}_{e},\\
        &\Pi_{\mathcal{M}}(g_e)=0, \quad \Pi_{\mathcal{M}}(\partial_tg_e)=0,\quad \Pi_{\mathcal{M}}(\nabla_v g_e)=0.
\end{align*}
\end{lemma}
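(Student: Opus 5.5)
The plan is to reduce every identity to a moment computation, using the explicit form \eqref{10} of $\Pi_{\mathcal{M}}$. By \eqref{10}, $\Pi_{\mathcal{M}}(\varphi)$ depends on $\varphi$ only through its moments $\langle\varphi\rangle$ and $\langle(v-Mu_e)\varphi\rangle$. For vector-valued arguments such as $\nabla_v\mathcal{M}_e$ and $\nabla_v g_e$, I apply $\Pi_{\mathcal{M}}$ componentwise.

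First I record the Gaussian moments of the Maxwellian \eqref{eq:Maxwellian:adim}:
\begin{equation*}
\langle\mathcal{M}_e\rangle=n_e,\qquad \langle(v-Mu_e)\mathcal{M}_e\rangle=0,\qquad \langle(v-Mu_e)\otimes(v-Mu_e)\mathcal{M}_e\rangle=n_eT_{e,0}\,I .
\end{equation*}
From these I deduce the key fact: for any coefficients $a(x,t)$ and $b(x,t)\in\mathbb{R}^{D_v}$, $\Pi_{\mathcal{M}}$ fixes functions of the form $\big(a+b\cdot(v-Mu_e)\big)\mathcal{M}_e$. Indeed, such a function has zeroth moment $a\,n_e$ and centered first moment $n_eT_{e,0}\,b$. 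Inserting these into \eqref{10} returns $\big(a+b\cdot(v-Mu_e)\big)\mathcal{M}_e$. In other words, $\Pi_{\mathcal{M}}$ is the identity on the span of $\mathcal B$ in \eqref{9}, with coefficients depending on $(x,t)$.

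The three Maxwellian identities then follow by showing each function lies in this span.
\begin{itemize}
\item For $\mathcal{M}_e$ itself, take $a=1$ and $b=0$.
\item Differentiating \eqref{eq:Maxwellian:adim}, with $T_{e,0}$ constant, gives $\partial_t\mathcal{M}_e=\big(\partial_tn_e/n_e+M(v-Mu_e)\cdot\partial_tu_e/T_{e,0}\big)\mathcal{M}_e$. This has the required form with $a=\partial_tn_e/n_e$ and $b=M\partial_tu_e/T_{e,0}$.
\item Similarly $\partial_{v_j}\mathcal{M}_e=-\big((v_j-Mu_{e,j})/T_{e,0}\big)\mathcal{M}_e$, which has the form with $a=0$ and $b=-e_j/T_{e,0}$.
\end{itemize}

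For the perturbation, I use the moment constraints $\langle g_e\rangle=0$ and $\langle vg_e\rangle=0$. These give $\langle(v-Mu_e)g_e\rangle=0$, hence $\Pi_{\mathcal{M}}(g_e)=0$. Since the constraints hold for all $t$, differentiating them in time (interchanging $\partial_t$ with the $v$-integral) gives $\langle\partial_tg_e\rangle=0$ and $\langle v\,\partial_tg_e\rangle=0$. Then $\langle(v-Mu_e)\partial_tg_e\rangle=\langle v\,\partial_tg_e\rangle-Mu_e\langle\partial_tg_e\rangle=0$, so $\Pi_{\mathcal{M}}(\partial_tg_e)=0$. For $\partial_{v_j}g_e$, I integrate by parts in $v$, so that $\langle\partial_{v_j}g_e\rangle=0$ and
\begin{equation*}
\langle(v_i-Mu_{e,i})\,\partial_{v_j}g_e\rangle=-\delta_{ij}\langle g_e\rangle=0 .
\end{equation*}

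The only step I expect to need care is justifying these analytic manipulations: differentiation under the integral sign, and vanishing boundary terms in the integration by parts. I will handle this by assuming $g_e$ is smooth with sufficient decay as $|v|\to\infty$ (or vanishes on $\partial\Omega_v$ for a bounded velocity domain), so that $(1+|v|)g_e$ is integrable and has integrable $t$-derivative. This is the standing assumption implicit in the moment derivations \eqref{7}--\eqref{8}; everything else is routine algebra.
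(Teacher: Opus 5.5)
Your proof is correct and is essentially the paper's argument: both reduce each identity to inserting the zeroth and centered first moments into \eqref{10}. You organize it slightly differently, first showing that $\Pi_{\mathcal{M}}$ fixes every $(a+b\cdot(v-Mu_e))\mathcal{M}_e$ and then differentiating $\mathcal{M}_e$ explicitly; this spells out the final identification with $\partial_t\mathcal{M}_e$, which the paper leaves implicit, and it replaces the paper's integration by parts for $\nabla_v\mathcal{M}_e$ with a direct computation. One caveat: the exact Gaussian moments you use require $\Omega_v=\mathbb{R}^{D_v}$, so your bounded-domain alternative is fine for the $g_e$ identities but not for the Maxwellian ones, which a truncated velocity domain satisfies only approximately (the paper makes the same full-space assumption implicitly).
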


\begin{proof}
By the definition of the dimensionless Maxwellian,
\begin{align*}
        \Pi_{\mathcal{M}}(\mathcal{M}_{e}) &= \frac{1}{n_e}\left[ \langle\mathcal{M}_{e}\rangle+\frac{(v-Mu_e)}{T_{e,0}}\cdot\langle(v-Mu_e)\mathcal{M}_{e}\rangle \right]\mathcal{M}_{e}\\
        &=\frac{1}{n_e}\left[ n_e +\frac{(v-Mu_e)}{T_{e,0}}\cdot(Mn_eu_e-Mn_eu_e) \right]\mathcal{M}_{e}=\mathcal{M}_e.
\end{align*}
Similarly,
\Del{\begin{align*}
        \Pi_{\mathcal{M}}(\partial_t\mathcal{M}_{e}) &= \frac{1}{n_e}\left[ \partial_tn_e+\frac{(v-Mu_e)}{T_{e,0}}\cdot(M\partial_t(n_eu_e) - Mu_e\partial_tn_e) \right]\mathcal{M}_{e} %\\
        %&=\frac{\partial_tn_e}{n_e}\mathcal{M}_e + M\frac{(v-Mu_e)}{T_{e,0}}(\partial_tu_e)\mathcal{M}_e
        =\partial_t\mathcal{M}_e.
\end{align*}}
The identity $\Pi_{\mathcal{M}}(\nabla_v\mathcal{M}_e)=\nabla_v\mathcal{M}_e$ follows in the same way, using integration by parts. Finally, the properties involving $g_e$ follow immediately from $\langle g_e\rangle=0$ and $\langle vg_e\rangle=0$.
\end{proof}

Applying $\mathbb{I}-\Pi_{\mathcal{M}}$ to the kinetic equation in \eqref{5} gives the microscopic equation \Del{driving the evolution of $g_e$,}
%\begin{equation} \label{11}
%    \begin{split}
%        M\partial_t{g_e}&+(\mathbb{I}-\Pi_{\mathcal{M}})(v\cdot\nabla_xg_e-\Omega_eMv\times B\cdot\nabla_vg_e)-\eta E\cdot\nabla_vg_e \\
%        &\hspace{1cm}=-\frac{M}{\kappa}\nu_{ee}g_e-(\mathbb{I}-\Pi_{\mathcal{M}})(v\cdot\nabla_x\mathcal{M}_{e}-\Omega_eMv\times B\cdot\nabla_v\mathcal{M}_{e}).
%    \end{split}
%\end{equation}
together with \eqref{7}, \eqref{8}, and Poisson equation, this forms a closed \Fabrice{system, referred to as }micro--macro system\Del{, specified in the following proposition.}

\begin{proposition}[\Del{Equivalence of the micro–macro and Vlasov–BGK–Poisson systems}]\label{prop:equivalence}
The micro--macro system defined as 
\begin{align}
    &\partial_{t}{n_e}+\nabla_{x}\cdot(n_e u_e)=0,\label{MMd}\\
    \begin{split}
    &M^{2}(\partial_{t}(n_e u_e)+\nabla_{x}\cdot(n_e u_e\otimes u_e))+T_{e,0}\nabla_{x}n_e-
    \eta n_e\nabla_{x}\phi\\
    &\hspace{5.5cm}+\Omega_{e}M^{2}n_e u_e\times B = -\nabla_{x}\cdot\langle v\otimes vg_{e}\rangle,
    \end{split}\label{MMm}\\
    \begin{split}
    &M\partial_t{g_e}+(\mathbb{I}-\Pi_{\mathcal{M}})(v\cdot\nabla_xg_e-\Omega_eMv\times B\cdot\nabla_vg_e)-\eta E\cdot\nabla_vg_e \\
    &\hspace{2cm}=-\frac{M}{\kappa}\nu_{ee}g_e-(\mathbb{I}-\Pi_{\mathcal{M}})(v\cdot\nabla_x\mathcal{M}_{e}-\Omega_eMv\times B\cdot\nabla_v\mathcal{M}_{e}),
    \end{split}\label{MMk}\\
    &-\lambda^{2}\eta\Delta_{x}\phi=n_{i}-n_e,\label{MMp}
\end{align}
is equivalent to the dimensionless Vlasov--BGK--Poisson system \eqref{5}.
\end{proposition}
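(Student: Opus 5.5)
Both directions are proved separately, using the decomposition $f_e=\mathcal{M}_e+g_e$ with the moment constraints $\langle g_e\rangle=0$, $\langle v g_e\rangle=0$, and the projection properties of the preceding Lemma.

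\emph{Direct implication.} Let $(f_e,\phi)$ solve \eqref{5}, and define $n_e,u_e$ as its moments, $\mathcal{M}_e$ by \eqref{eq:Maxwellian:adim} and $g_e=f_e-\mathcal{M}_e$. Then $g_e$ has vanishing zeroth and first moments. Taking $\langle\cdot\rangle$ and $\langle v\,\cdot\rangle$ of the Vlasov equation gives \eqref{MMd} and \eqref{MMm}. The collision term integrates to zero because $\mathcal{M}_e$ and $f_e$ share density and momentum. The pressure term follows from $\langle v\otimes v\mathcal{M}_e\rangle=M^2n_eu_e\otimes u_e+T_{e,0}n_e\mathbb{I}$. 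The force terms are handled by integration by parts in $v$; the magnetic term uses $\nabla_v\cdot(v\times B)=0$.

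Next I apply $\mathbb{I}-\Pi_{\mathcal{M}}$ to the Vlasov equation. By the Lemma, $(\mathbb{I}-\Pi_{\mathcal{M}})(\partial_t\mathcal{M}_e)=0$ and $(\mathbb{I}-\Pi_{\mathcal{M}})\partial_t g_e=\partial_t g_e$. Likewise $(\mathbb{I}-\Pi_{\mathcal{M}})(E\cdot\nabla_v\mathcal{M}_e)=0$ and $(\mathbb{I}-\Pi_{\mathcal{M}})(E\cdot\nabla_v g_e)=E\cdot\nabla_v g_e$, since $E$ does not depend on $v$. On the collision side, $(\mathbb{I}-\Pi_{\mathcal{M}})(\mathcal{M}_e-f_e)=-g_e$. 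This yields \eqref{MMk}, and \eqref{MMp} is unchanged.

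\emph{Converse.} Let $(n_e,u_e,g_e,\phi)$ solve \eqref{MMd}--\eqref{MMp}, and set $f_e=\mathcal{M}_e+g_e$. This direction has two steps.

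First, I show that the constraints $\langle g_e\rangle=\langle vg_e\rangle=0$ are propagated, provided they hold initially. Take the moments $\langle(1,v)\,\cdot\rangle$ of \eqref{MMk}. Every term of the form $(\mathbb{I}-\Pi_{\mathcal{M}})(\cdot)$ has zero moments, because $\Pi_{\mathcal{M}}$ preserves the first two moments. The remaining terms give
\[
M\partial_t\langle g_e\rangle=-\tfrac{M}{\kappa}\nu_{ee}\langle g_e\rangle,
\qquad
M\partial_t\langle vg_e\rangle-\eta\langle E\cdot\nabla_v g_e\, v\rangle=-\tfrac{M}{\kappa}\nu_{ee}\langle vg_e\rangle ,
\]
where the force term equals $\eta E\langle g_e\rangle$ after integration by parts. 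This is a linear homogeneous ODE system in $(\langle g_e\rangle,\langle vg_e\rangle)$, so zero initial data gives the zero solution for all time.

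Second, I reconstruct the kinetic equation. Compute $\Pi_{\mathcal{M}}$ of the Vlasov equation satisfied by $f_e$; this equals $\frac1{n_e}[\,\cdot\,]\mathcal{M}_e$ applied to its zeroth and first moments. Those moments are exactly the residuals of \eqref{MMd} and \eqref{MMm}, since $g_e$ has vanishing moments, so they vanish. Adding this to \eqref{MMk}, which is the $(\mathbb{I}-\Pi_{\mathcal{M}})$-part, recovers \eqref{5}. The moments of $f_e$ are $n_e$ and $n_eu_e$, so $\mathcal{M}_e$ is indeed the Maxwellian associated with $f_e$.

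The main obstacle is this converse, specifically checking that the force and magnetic terms in the moment equation for $\langle vg_e\rangle$ close onto $\langle g_e\rangle,\langle vg_e\rangle$ only. The magnetic term enters \eqref{MMk} projected, so its moments vanish identically; the electric term is unprojected and must be handled by integration by parts as above. Everything else is routine moment algebra.
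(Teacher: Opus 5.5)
Your proof is correct and follows the same projection route as the paper: the $\Pi_{\mathcal{M}}$-part of \eqref{5} is equivalent to its zeroth and first moments, i.e.\ \eqref{MMd}--\eqref{MMm}, and the $(\mathbb{I}-\Pi_{\mathcal{M}})$-part is \eqref{MMk}. Your converse is more complete than the paper's, which never checks that \eqref{MMk} propagates $\langle g_e\rangle=\langle vg_e\rangle=0$. Note also that both you and the paper tacitly take the BGK Maxwellian in \eqref{5} to be the isothermal one of \eqref{eq:Maxwellian:adim}, with temperature $T_{e,0}$; this is what makes $(\mathbb{I}-\Pi_{\mathcal{M}})(\mathcal{M}_e-f_e)=-g_e$ hold.
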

\begin{proof}
The Poisson equations in the two formulations are identical, so it remains to prove the equivalence of the kinetic equation and the micro--macro \Fabrice{set of equations, namely equations \eqref{7}, \eqref{8} for the macroscopic quantities and equation \eqref{MMk} for the kinetic complement}. Applying the projection $\Pi_{\mathcal{M}}$ to the kinetic equation in \eqref{5} yields the macroscopic projected equation
\begin{equation*}
        \begin{split}
            M\partial_t\mathcal{M}_e&+\frac{1}{n_e}v(\partial_xn_e)\mathcal{M}_e+M(\partial_xu_e)\mathcal{M}_e-\frac{1}{n_e}\partial_x\langle v^2g_e\rangle\partial_v\mathcal{M}_e-M^2u_e(\partial_xu_e)\partial_v\mathcal{M}_e\\
            &=\eta E\,\partial_v\mathcal{M}_e+\Omega_eM^2(u_e\times B)\partial_v\mathcal{M}_e.
        \end{split}
\end{equation*}
\Fabrice{The parameters of the Maxwellian satisfy the zeroth and first moments of this kinetic equation, yielding equations} \eqref{7} and \eqref{8}. Since equation \eqref{MMk} is obtained by applying $\mathbb{I}-\Pi_{\mathcal{M}}$ to the same kinetic equation, the projected macroscopic equations and the microscopic equation together are equivalent to the original dimensionless Vlasov--BGK equation. This \Fabrice{concludes the proof}.
\end{proof}

\subsection{Asymptotic limits}
The purpose here is to emphasize the singular nature of both the quasi-neutral and the low-Mach limits. The degeneracy of the system is first highlighted before a strategy to circumvent this difficulty is proposed.
\subsubsection{Quasi-neutral limit}
\Fabrice{The quasi-neutral limit is reached when the characteristic scale of charge separation, namely the Debye length, is much smaller than the system's characteristic length, which corresponds to the asymptotic limit $\lambda\to0$. In this regime, the governing equations remain unchanged, except for Poisson equation, which degenerates into the local charge neutrality condition $n_e=n_i$.}
%\begin{align*}
%    &\partial_{t}{n_e}+\nabla_{x}\cdot(n_e u_e)=0,\\
%    &M^{2}(\partial_{t}(n_e u_e)+\nabla_{x}\cdot(n_e u_e\otimes u_e))+T_{e,0}\nabla_{x}n_e-
%    \eta n_e\nabla_{x}\phi+\Omega_{e}M^{2}n_e u_e\times B = -\nabla_{x}\cdot\langle v\otimes vg_{e}\rangle,\\
%    &M\partial_t{g_e}+(\mathbb{I}-\Pi_{\mathcal{M}})(v\cdot\nabla_xg_e-\Omega_eMv\times B\cdot\nabla_vg_e)-\eta E\cdot\nabla_vg_e \\
%    &\hspace{6.1cm}=-\frac{M}{\kappa}\nu_{ee}g_e-(\mathbb{I}-\Pi_{\mathcal{M}})(v\cdot\nabla_x\mathcal{M}_{e}-\Omega_eMv\times B\cdot\nabla_v\mathcal{M}_{e}),\\
%    &n_e=n_i.
%\end{align*}
%The Poisson equation degenerates into 
\Fabrice{Although consistent with the assumption of zero net charge, this algebraic relation no longer allows for a direct calculation of the electric potential $\phi$. To circumvent this, the continuity equation $\partial_t \rho + \nabla \cdot J = 0$ (derived from the species continuity equations) is routinely exploited to provide an alternative equation for $\phi$. This is achieved by imposing the current conservation condition, $\nabla \cdot J = 0$, which acts as an implicit constraint to determine the electric potential. This highlights the singular nature of the quasi-neutral limit: Poisson equation is substituted by a constraint originating from the kinetic equation itself, fundamentally tied to the plasma dynamics rather than to the electrostatic field. In this regard, the micro--macro approach is particularly well-suited, since the macroscopic moments of the Vlasov equation are inherently part of the model, thereby naturally providing the mathematical framework required to compute the electric field in the quasi-neutral limit.}

The following proposition provides the elliptic governing equation for the electric potential in the limit $\lambda\to0$, which replaces the degenerate Poisson equation.

\begin{proposition}[Reformulated Poisson equation and quasi-neutral model] \label{prop2.2}
Within the micro--macro framework, assuming that the initial conditions satisfy both Poisson equation and its first-order time derivative, namely:
\begin{equation} \label{eq:init_cond}
    -\lambda^2\eta\Delta_x\phi\vert_{t=0} = n_i - n_e\vert_{t=0} \quad \text{and} \quad -\lambda^2\eta\Delta_x\partial_t\phi\vert_{t=0} = \nabla_x\cdot(n_e u_e)\vert_{t=0},
\end{equation}
then Poisson equation is equivalent for all $t > 0$ to the following reformulated equation:
\begin{align} \label{eq:poisson_reformulated}
    -\nabla_x\cdot\Big((\Fabrice{\eta}n_e+M^2\lambda^2\eta\partial_t^2)\nabla_x\phi\Big) = &- M^2\nabla_x^2:(n_eu_e\otimes u_e)-T_{e,0}\Fabrice{\Delta_x}n_e \\ &\Fabrice{ - \Omega_e M^2 \nabla_x \cdot (n_e u_e \times B )} - \nabla_x^2:\left< v\otimes vg_e \right>. \notag
\end{align}
where $\nabla_x^2 : T = \nabla_x \cdot (\nabla_x \cdot T)$ denotes the double divergence of a second-order tensor $T$.

In the limit $\lambda\to0$, the reformulated Poisson equation yields the following elliptic equation for the electric potential, subject to suitable boundary conditions:
\begin{align*}
    -\nabla_x\cdot(\Fabrice{\eta}n_e\nabla_x\phi) = &- M^2\nabla_x^2:(n_eu_e\otimes u_e)-T_{e,0}\Fabrice{\Delta_x}n_e \\
   &\quad \Fabrice{ - \Omega_e M^2 \nabla_x \cdot (n_e u_e \times B )} - \nabla_x^2:\left< v\otimes vg_e \right>.
\end{align*}
\end{proposition}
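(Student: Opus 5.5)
Both directions rest on differentiating Poisson's equation \eqref{MMp} twice in time and eliminating $n_e$ with the macroscopic equations \eqref{MMd}--\eqref{MMm}. We assume $n_i$ independent of time, as implicit in the statement (otherwise $\partial_t^2 n_i$ would appear on the right-hand side).

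\emph{Forward direction.} Suppose \eqref{MMp} holds for all $t$. Differentiating once in time and using \eqref{MMd} gives
\[
-\lambda^2\eta\Delta_x\partial_t\phi=-\partial_t n_e=\nabla_x\cdot(n_eu_e).
\]
Differentiating again gives $-\lambda^2\eta\Delta_x\partial_t^2\phi=\nabla_x\cdot\partial_t(n_eu_e)$. We then take the divergence of \eqref{MMm}, which yields
\[
M^2\nabla_x\cdot\partial_t(n_eu_e)=-M^2\nabla_x^2:(n_eu_e\otimes u_e)-T_{e,0}\Delta_xn_e+\eta\nabla_x\cdot(n_e\nabla_x\phi)-\Omega_eM^2\nabla_x\cdot(n_eu_e\times B)-\nabla_x^2:\langle v\otimes vg_e\rangle .
\]
Multiplying the twice-differentiated Poisson identity by $M^2$ and substituting this expression for $M^2\nabla_x\cdot\partial_t(n_eu_e)$, we move the $\eta\nabla_x\cdot(n_e\nabla_x\phi)$ term to the left. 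Since $\partial_t^2$ commutes with $\nabla_x$, the left side combines as $-\nabla_x\cdot\big((\eta n_e+M^2\lambda^2\eta\partial_t^2)\nabla_x\phi\big)$, which is exactly \eqref{eq:poisson_reformulated}.

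\emph{Converse.} Suppose \eqref{eq:poisson_reformulated} holds, together with \eqref{MMd}, \eqref{MMm} and the initial conditions \eqref{eq:init_cond}. Set $R(t)=-\lambda^2\eta\Delta_x\phi-n_i+n_e$. Reversing the computation above, i.e.\ subtracting the divergence of \eqref{MMm} from \eqref{eq:poisson_reformulated}, gives $M^2\big(-\lambda^2\eta\Delta_x\partial_t^2\phi-\nabla_x\cdot\partial_t(n_eu_e)\big)=0$. Using \eqref{MMd}, $\nabla_x\cdot\partial_t(n_eu_e)=-\partial_t^2n_e$, so this is $M^2\partial_t^2R=0$. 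For $M>0$ this gives $\partial_t^2R=0$, and the two initial conditions give $R(0)=0$ and $\partial_tR(0)=-\lambda^2\eta\Delta_x\partial_t\phi|_{t=0}+\partial_tn_e|_{t=0}=\nabla_x\cdot(n_eu_e)|_{t=0}-\nabla_x\cdot(n_eu_e)|_{t=0}=0$. Hence $R\equiv0$ for all $t>0$. Note that the $M^2$ factor is what ties the equivalence to $M>0$.

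\emph{Limit $\lambda\to0$.} Formally setting $\lambda=0$ removes the $M^2\lambda^2\eta\partial_t^2$ term and leaves the elliptic equation stated in the proposition. It is uniformly elliptic provided $\eta n_e$ is bounded below by a positive constant, which together with suitable boundary conditions determines $\phi$.

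The main obstacle is the converse direction. It needs the commutation of $\partial_t$ with $\nabla_x$ and sufficient regularity in time, and it is exactly where the compatibility conditions \eqref{eq:init_cond} are required, since the reformulated equation alone only controls $\partial_t^2$ of the Poisson residual. The remaining steps are direct algebra.
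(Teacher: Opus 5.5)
Your proposal is correct and follows the paper's proof. The forward direction performs the same elimination of $\partial_t\nabla_x\cdot(n_eu_e)$ between the time-differentiated continuity equation, the divergence of \eqref{MMm} and the twice time-differentiated Poisson equation; the paper only packages this first as the wave equation \eqref{eq:wave_ne} for $n_e$. Your converse with $R=-\mathcal{E}$ is the paper's argument $M^2\partial_t^2\mathcal{E}=0$ with $\mathcal{E}|_{t=0}=\partial_t\mathcal{E}|_{t=0}=0$, and your explicit remarks that the ions must be static and that cancelling $M^2$ requires $M>0$ are correct points the paper leaves implicit.
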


\begin{proof} \label{prop2.2pf}
The proof consists in time differentiating the continuity equation $\partial_t \rho + \nabla_x \cdot J=0$ to establish a wave-like equation for the charge density, which is then combined with the second-order time derivative of Poisson equation. Under the assumption of stationary ions, the continuity equation reduces to the electron density conservation law.

Differentiating equation \eqref{MMd} with respect to time and taking the divergence of the momentum equation \eqref{MMm} yields:
\begin{align*}
        &\partial_t^2n_e + \partial_t\nabla_x\cdot(n_eu_e) = 0, \\
        &M^2\partial_t\nabla_x\cdot(n_eu_e) + M^2\nabla_x^2:(n_eu_e\otimes u_e)+T_{e,0}\Fabrice{\Delta_x}n_e  = \\
        & \hspace{3.5cm} \Fabrice{\eta} \nabla_x\cdot(n_e\nabla_x\phi)\Fabrice{ - \Omega_e M^2 \nabla_x \cdot (n_e u_e \times B )}-\nabla_x^2:\left< v\otimes vg_e \right>.
\end{align*}
Eliminating the cross-derivative term $\partial_t\nabla_x\cdot(n_eu_e)$ between these two relations yields:
\begin{equation} \label{eq:wave_ne}
    \begin{split}
        M^2\partial_t^2n_e + \Fabrice{\eta}\nabla_x\cdot(n_e\nabla_x\phi) &= M^2\nabla_x^2:(n_eu_e\otimes u_e)+T_{e,0}\Fabrice{\Delta_x}n_e \\
        &\quad\Fabrice{ + \Omega_e M^2 \nabla_x \cdot (n_e u_e \times B )} + \nabla_x^2:\left< v\otimes vg_e \right>.
    \end{split}
\end{equation}
The reformulated Poisson equation is obtained by substituting $M^2\partial_t^2n_e$ into the double time derivative of Poisson equation: $-M^2 \lambda^2 \partial_t^2 \Delta_x \phi = -M^2 \partial_t^2 n_e$ providing equation \eqref{eq:poisson_reformulated}.

Now, let us define the deviation from Poisson equation as $\mathcal{E}(t,x) = \lambda^2 \eta \Delta_x \phi + n_i-n_e$, which vanishes if Poisson equation holds true. Since the ions are at rest ($\partial_t^2 n_i = 0$), the second-order time derivative of $\mathcal{E}$ reads $M^2 \partial_t^2 \mathcal{E} = -M^2 \partial_t^2 n_e + M^2 \lambda^2 \eta \partial_t^2 \Delta_x \phi$. Substituting $M^2 \partial_t^2 n_e$ from \eqref{eq:wave_ne} into this relation yields:
\begin{equation}\label{eq:Poisson:deviation}
    \begin{multlined}[0.9\textwidth]
    M^2\partial_t^2 \mathcal{E} = \nabla_x\cdot\Big((\Fabrice{\eta}n_e+M^2\lambda^2\eta\partial_t^2)\nabla_x\phi\Big) \\ - M^2\nabla_x^2:(n_eu_e\otimes u_e)-T_{e,0}\Fabrice{\Delta_x}n_e \Fabrice{ - \Omega_e M^2 \nabla_x \cdot (n_e u_e \times B )} - \nabla_x^2:\left< v\otimes vg_e \right>.
    \end{multlined}
\end{equation}
Enforcing the reformulated Poisson equation \eqref{eq:poisson_reformulated} is mathematically equivalent to canceling the source term in \eqref{eq:Poisson:deviation}. Since this equation dictates the dynamics of the deviation, we obtain the homogeneous relation $M^2 \partial_t^2 \mathcal{E}(t,x) = 0$ for all $t > 0$. Consequently, the Poisson equation is satisfied for all times if and only if the deviation and its first derivative vanish at $t = 0$:
\begin{equation*}
    \mathcal{E}\vert_{t=0} = 0 \quad \text{and} \quad \partial_t \mathcal{E}\vert_{t=0} = 0,
\end{equation*}
which corresponds exactly to the initial conditions stated by equation \eqref{eq:init_cond}. 

Finally, it should be noted that the limit $\lambda \to 0$ inherently presupposes a strictly positive electron density ($n_e > 0$), which in turn guarantees the ellipticity of the reformulated equation in the quasi-neutral limit.
\end{proof}
\begin{remark}\label{remark:QN}
\Del{This continuous analysis is carried out to guide the time discretization, and calls for two comments. First, an AP scheme needs not be a direct discretization of the reformulated equations through second-order time derivatives. These derivatives are only an artifact of the continuous manipulation used to eliminate the cross terms between Poisson equation and the continuity equation. At the semi-discrete level, the implicit coupling of the continuity and momentum updates performs the same elimination directly, so that neither the wave-type reformulation nor time derivatives of Poisson equation need to be discretized as such.
Second, and more importantly, the reformulation shows that the electric field enters through two distinct contributions, one from Poisson equation and one from the electric-force term of the momentum equation. It is this double occurrence that renders the quasi-neutral limit regular, and the AP property therefore requires a semi-implicit discretization in which both occurrences of the electric field are treated implicitly.}
\end{remark}
\subsubsection{Asymptotic analysis of the joint low-Mach and fluid limits}

We first recall that the Boltzmann equilibrium is recovered from the micro--macro system through a joint fluid and low-Mach asymptotic regime. In strongly magnetized plasmas, this equilibrium holds along the magnetic field lines. Before restricting our attention to a simplified one-dimensional geometry along a magnetic field line, we first examine the behavior of the full multidimensional micro--macro model in this asymptotic regime to outline the difficulties raised by this asymptotic.

Formally taking the limit $M \to 0$ in equation \eqref{MMk}, the time-derivative term vanishes. This behavior reflects the physical nature of the low-Mach scaling: the characteristic macroscopic time scale is governed by the mean plasma bulk velocity, whereas the dynamics of the distribution function is genuinely driven by the microscopic (thermal) particle speed. Under the assumption that thermal velocities significantly exceed the mean velocity, the kinetic equation reduces to:
\begin{equation*}
\begin{split}
    &(\mathbb{I}-\Pi_{\mathcal{M}})\big(v\cdot\nabla_x g_e - \Omega_e M (v\times B)\cdot\nabla_v g_e\big) - \eta E\cdot\nabla_v g_e \\
    &\hspace{2cm} = -\frac{M}{\kappa}\nu_{ee}g_e - (\mathbb{I}-\Pi_{\mathcal{M}})\big(v\cdot\nabla_x\mathcal{M}_e - \Omega_e M (v\times B)\cdot\nabla_v\mathcal{M}_e\big).
\end{split}
\end{equation*}

At the macroscopic level, the full system simplifies to:
\begin{align*}
    &\partial_{t}{n_e} + \nabla_{x}\cdot(n_e u_e) = 0, \\
    &T_{e,0}\nabla_{x}n_e - \eta n_e\nabla_{x}\phi + \Omega_{e}M^{2}n_e u_e\times B = -\nabla_{x}\cdot\langle v\otimes v g_{e}\rangle,
\end{align*}
with Poisson equation remaining unchanged.

The macroscopic momentum equation is singular because it no longer allows for a direct determination of the parallel component of the momentum. To isolate this parallel direction, we align the magnetic field with the $x$-direction without loss of generality, thereby focusing the analysis on the 1D-1V parallel dynamics. The momentum balance yields:
\begin{equation}
    T_{e,0}\partial_{x}n_e - \eta n_e\partial_{x}\phi = -\partial_x \langle v^2 g_e \rangle.
\end{equation}
The parallel momentum no longer appears explicitly in this relation, which underscores the singular nature of the limit. To retrieve the Boltzmann equilibrium and restore a well-posed momentum equation, a fluid limit must also be enforced to eliminate the kinetic pressure $\langle v^2 g_e \rangle$. This is achieved by coupling the low-Mach and fluid limits through the scaling $\kappa = o(M^2)$ (e.g., $\kappa < M^2$). This coupling serves a dual purpose: first, it links the fluid asymptotic regime to the low-Mach limit to establish the desired force balance; second, it forces $g_e \to 0$ rapidly enough. Under this scaling, the kinetic equation for $g_e$ remains non-degenerate and can be recast as:
\begin{equation*}
\begin{split}
    \nu_{ee}g_e &= -M (\mathbb{I}-\Pi_{\mathcal{M}})\big(v\cdot\nabla_x g_e - \Omega_e M (v\times B)\cdot\nabla_v g_e\big) + M \eta E\cdot\nabla_v g_e \\
    &\quad - M (\mathbb{I}-\Pi_{\mathcal{M}})\big(v\cdot\nabla_x\mathcal{M}_e - \Omega_e M (v\times B)\cdot\nabla_v\mathcal{M}_e\big),
\end{split}
\end{equation*}
which guarantees that $g_e \to 0$ as $M \to 0$.

We now state the asymptotic properties of the micro--macro system in this combined regime. To simplify the subsequent analysis, the reformulated system is presented for a 1D-1V problem.

\begin{proposition}[Formal fluid low-Mach limit via Hilbert expansion] \label{prop2.3}
In the joint regime $(\kappa, M) \to (0, 0)$, under the scaling hypothesis $\kappa = o(M^2)$ (i.e., $\kappa / M^2 \to 0$), consider the Hilbert expansions in powers of $M^2$:
\begin{equation} \label{eq:hilbert_expansions}
    n_e = \sum_{k=0}^\infty (M^2)^k n_{e,k}, \quad 
    u_e = \sum_{k=0}^\infty (M^2)^k u_{e,k}, \quad 
    \phi = \sum_{k=0}^\infty (M^2)^k \phi_{k}.
\end{equation}
Injecting these expansions into the one-dimensional micro--macro model along the magnetic field lines yields, up to order $M^2$, the following closed formal limit system:
\begin{align}
        &\partial_t n_{e,0} + \partial_x (n_{e,0} u_{e,0}) = 0, \label{eq:M0:1}\\
        &T_{e,0}\partial_x n_{e,0} - \eta n_{e,0}\partial_x\phi_0 = 0, \label{eq:M0:2}\\
        &\partial_{t}(n_{e,0} u_{e,0})+\partial_{x}(n_{e,0} u_{e,0}^2) = \eta\big( n_{e,0}\partial_x\phi_{1}+n_{e,1}\partial_x\phi_{0}\big)-T_{e,0}\partial_x n_{e,1}, \label{eq:momentum:Hilbert}\\
        &g_e = 0, \label{eq:M0:4}\\
        &-\lambda^2 \eta \partial^2_{xx} \phi_{0} = n_i - n_{e,0}, \label{eq:M0:5a}\\
        &-\lambda^2 \eta \partial^2_{xx} \phi_{1} = - n_{e,1}. \label{eq:M0:5b}
\end{align}
\end{proposition}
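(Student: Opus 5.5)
The argument is a formal Hilbert expansion carried out on the 1D-1V restriction of the micro--macro system \eqref{MMd}--\eqref{MMp}, with $B$ aligned with $x$. In that case $\Omega_e M^2 n_e u_e\times B$ has no parallel component and drops out.

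\textbf{Step 1: $g_e=0$.} I treat the kinetic equation \eqref{MMk} first and show that the kinetic pressure vanishes to the required order. Multiplying \eqref{MMk} by $\kappa/(M\nu_{ee})$ gives
\begin{equation*}
g_e = -\frac{\kappa}{\nu_{ee}}\Big(\partial_t g_e + \tfrac{1}{M}(\mathbb{I}-\Pi_{\mathcal{M}})(v\partial_x g_e) - \tfrac{\eta}{M}E\,\partial_v g_e + \tfrac{1}{M}(\mathbb{I}-\Pi_{\mathcal{M}})(v\partial_x\mathcal{M}_e)\Big).
\end{equation*}
Every coefficient on the right is at most of size $\kappa/M$, which is $o(M)$ because $\kappa=o(M^2)$. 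The source term $(\mathbb{I}-\Pi_{\mathcal{M}})(v\partial_x\mathcal{M}_e)$ is $O(1)$, so iterating the relation yields $g_e=O(\kappa/M)$.

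This bound is not quite enough on its own. The momentum equation \eqref{MMm} carries the right-hand side $-\partial_x\langle v^2 g_e\rangle$, which must be $o(M^2)$ so that it does not enter the $O(M^2)$ balance. The bound $g_e=O(\kappa/M)$ only gives $o(M)$. I expect this to be the main obstacle, and I plan to handle it in one of two ways. The first is to compute the leading part of $g_e$ explicitly. Since $T_{e,0}$ is constant, the deviation is
\begin{equation*}
(\mathbb{I}-\Pi_{\mathcal{M}})(v\partial_x\mathcal{M}_e)=\partial_x n_e\,\tfrac{1}{n_e}\big(v-\Pi\big)\mathcal{M}_e+O(M),
\end{equation*}
whose second moment is $T_{e,0}\partial_x n_e$ minus its projection. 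This should be $O(M)$ up to an odd-moment cancellation, because at $u_e=0$ the term $v\mathcal{M}_e$ is odd and its $v^2$ moment vanishes. That gives $\langle v^2 g_e\rangle=O(\kappa)=o(M^2)$. The fallback is to state the hypothesis directly as "$g_e=0$ at orders $M^0$ and $M^2$", which is what the formal limit requires, and record \eqref{eq:M0:4}.

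\textbf{Step 2: macroscopic hierarchy.} With $g_e$ removed, I insert the expansions \eqref{eq:hilbert_expansions} into \eqref{MMd}, \eqref{MMm} and \eqref{MMp}. The products $n_e\partial_x\phi$ and $n_eu_e^2$ are expanded by Cauchy product. Collecting powers of $M^2$:
\begin{itemize}
\item At order $M^0$, the continuity equation gives \eqref{eq:M0:1}, Poisson gives \eqref{eq:M0:5a}, and momentum gives the force balance \eqref{eq:M0:2}, since the inertia term carries an $M^2$ factor.
\item At order $M^2$, momentum gives
\begin{equation*}
\partial_t(n_{e,0}u_{e,0})+\partial_x(n_{e,0}u_{e,0}^2)+T_{e,0}\partial_x n_{e,1}-\eta\big(n_{e,0}\partial_x\phi_1+n_{e,1}\partial_x\phi_0\big)=0,
\end{equation*}
which is \eqref{eq:momentum:Hilbert}. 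Poisson at this order gives \eqref{eq:M0:5b}, because $n_i$ is $O(1)$ and unexpanded.
\end{itemize}

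\textbf{Step 3: closure.} I would check that the system closes. The unknowns $(n_{e,0},u_{e,0},\phi_0)$ appear in \eqref{eq:M0:1}, \eqref{eq:M0:2} and \eqref{eq:M0:5a}, but the velocity $u_{e,0}$ is not determined by them. It is fixed by \eqref{eq:momentum:Hilbert} together with \eqref{eq:M0:5b} and the first correction $(n_{e,1},\phi_1)$. The order-$M^2$ continuity equation for $n_{e,1}$ is not needed up to this order, which is why it is omitted from the statement.
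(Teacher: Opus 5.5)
Your Steps 2 and 3 match the paper's proof: order-by-order identification in the parallel momentum and Poisson equations, with the $\mathcal{O}(M^2)$ momentum balance giving \eqref{eq:momentum:Hilbert}. Step 1, which you rightly flag as the crux, is not closed, and it is exactly what the paper gets from Lemma~\ref{lem:ge_estimate_1d}.

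\textbf{The source cancellation.} The cancellation is exact and applies to the whole distribution, not only to a $v^2$-moment at $u_e=0$. Since $T_{e,0}$ is constant,
$v\,\partial_x\mathcal{M}_e=\frac{\partial_x n_e}{n_e}\,v\mathcal{M}_e+M\frac{\partial_x u_e}{T_{e,0}}\,v(v-Mu_e)\mathcal{M}_e$.
Moreover $v\mathcal{M}_e=(v-Mu_e)\mathcal{M}_e+Mu_e\mathcal{M}_e$ lies in the span of $\mathcal{B}$, so $\mathbb{I}-\Pi_{\mathcal{M}}$ annihilates it for every $u_e$. Hence $-(\mathbb{I}-\Pi_{\mathcal{M}})(v\partial_x\mathcal{M}_e)=-M\Sigma$ with $\Sigma=\frac{\partial_x u_e}{T_{e,0}}\big((v-Mu_e)^2-T_{e,0}\big)\mathcal{M}_e=\mathcal{O}(1)$. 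The source contribution to your implicit relation is therefore $\mathcal{O}(\kappa)$, not $\mathcal{O}(\kappa/M)$. The leading term in your expansion, $\frac{\partial_x n_e}{n_e}(\mathbb{I}-\Pi_{\mathcal{M}})(v\mathcal{M}_e)$, is identically zero; its second moment is not ``$T_{e,0}\partial_x n_e$ minus its projection.''

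\textbf{The transport term.} This is the more serious gap. Even with an $\mathcal{O}(M)$ source, the term $\frac{\kappa}{M\nu_{ee}}\big((\mathbb{I}-\Pi_{\mathcal{M}})(v\partial_x g_e)-\eta E\,\partial_v g_e\big)$ is only $\mathcal{O}(\kappa/M)$ under uniform a priori bounds on the derivatives of $g_e$. That is not $o(M^2)$ in general: take $\kappa=M^{5/2}$. Taking the $v^2$-moment does not remove it, since it couples to $\partial_x\langle v^3 g_e\rangle$, so ``that gives $\langle v^2 g_e\rangle=\mathcal{O}(\kappa)$'' does not follow.

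The paper closes this with a two-pass argument:
\begin{enumerate}
\item Differentiate the $g_e$-equation in $x$ and $v$, accounting for the commutators with the transport operator. A first pass gives $\partial_x g_e,\partial_v g_e=\mathcal{O}(\kappa/M)$.
\item Resubstitute, so the transport term becomes $\mathcal{O}(\kappa^2/M^2)=o(\kappa)$. This is where $\kappa=o(M^2)$ genuinely enters.
\item Conclude $g_e=\mathcal{O}(\kappa)$ and $\partial_x g_e=\mathcal{O}(\kappa)$. On the bounded velocity domain, with well-prepared data excluding the initial layer, this gives $|\partial_x\langle v^2 g_e\rangle|\le 2V_{\max}^3\|\partial_x g_e\|_{L^\infty_v}=o(M^2)$.
\end{enumerate}

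Two further points:
\begin{itemize}
\item What the momentum balance needs is $\partial_x\langle v^2 g_e\rangle$, not $\langle v^2 g_e\rangle$.
\item Only the full bound $g_e=o(M^2)$ gives $g_{e,0}=g_{e,1}=0$, i.e.\ \eqref{eq:M0:4} up to order $M^2$. A moment-only argument leaves $g_e=\mathcal{O}(\kappa/M)$.
\end{itemize}
Your fallback of postulating that $g_e$ vanishes at orders $M^0$ and $M^2$ assumes the conclusion, and leaves the hypothesis $\kappa=o(M^2)$ with no role.
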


The structure of the formal limit system \eqref{eq:M0:1}--\eqref{eq:M0:5b} calls for several remarks. First, the leading-order momentum equation \eqref{eq:M0:2} acts as a spatial constraint on $(n_{e,0}, \phi_0)$, recovering the classic Boltzmann relation and confirming the validity of the chosen scaling regime.

Second, the first-order Hilbert corrections $n_{e,1}$ and $\phi_1$ lack independent evolutionary dynamics (no continuity equation governs $\partial_t n_{e,1}$). Linked via Poisson relation $n_{e,1} = \lambda^2 \eta \partial^2_{xx} \phi_1$, they act as Lagrange-like multipliers in equation~\eqref{eq:momentum:Hilbert} to balance the advective momentum flux $\partial_t (n_{e,0} u_{e,0}) + \partial_x (n_{e,0} u_{e,0}^2)$.

Finally, solving the limit system directly through $(n_{e,1}, \phi_1)$ is not necessarily well suited for numerical schemes. This provides the primary motivation for introducing the augmented variable $L$ in Proposition~\ref{prop2.4}. Beyond reformulating this singular force imbalance into a well-posed auxiliary field without requiring explicit higher-order Hilbert corrections, introducing $L$ allows a single, unified set of augmented equations to seamlessly cover both the physical regime ($M \sim \mathcal{O}(1)$) and the asymptotic regime ($M \to 0$).

We now prove that the combined low-Mach and fluid limit leads to the limit model stated by Proposition~\ref{prop2.3}.

\begin{lemma}[Vanishing kinetic pressure contribution under $\kappa/M^2 \to 0$]
\label{lem:ge_estimate_1d}
Consider the 1D-1V setting along the magnetic field lines, with a bounded
velocity domain $\Omega_v = [-V_{\max}, V_{\max}]$, and assume that
\begin{enumerate}
    \item[(i)] the macroscopic fields $n_e$, $u_e$, $\phi$ are smooth, with derivatives bounded uniformly in $M$, and $\exists \,n_\ast>0,\  s.t.\ n_e\geq n_\ast$;
    \item[(ii)] the collision frequency satisfies $\exists\, \nu_\ast>0,\ s.t.\ \nu_{ee} \ge \nu_\ast$,
    with $\partial_x \nu_{ee}$ bounded uniformly in $M$;
    \item[(iii)] $g_e$ is smooth, and $g_e$, $\partial_t g_e$ and all their
    space and velocity derivatives are bounded in $L^\infty_v(\Omega_v)$
    uniformly with respect to $M$;
    \item[(iv)] the initial data are well prepared, $g_e\vert_{t=0} =
    \mathcal{O}(\kappa)$, so that the initial collisional layer of thickness
    $t \sim \kappa/\nu_{ee}$ is excluded.
\end{enumerate}
Then, under the coupled scaling $\kappa = o(M^2)$, the kinetic correction
satisfies $g_e = \mathcal{O}(\kappa)$, and the kinetic pressure gradient
\begin{equation} \label{eq:ge_bound_condensed}
    \partial_x \langle v^2 g_e \rangle = o(M^2)
\end{equation}
makes no contribution to the first-order parallel momentum balance
\eqref{eq:momentum:Hilbert}.
\end{lemma}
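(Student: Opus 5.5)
The plan is to read \eqref{MMk} in 1D--1V with the magnetic terms dropped (along field lines the Lorentz term has no parallel component), as an algebraic relation for $g_e$. Multiplying it by $\kappa/(M\nu_{ee})$ gives
\begin{equation*}
    g_e = -\frac{\kappa}{\nu_{ee}}\partial_t g_e - \frac{\kappa}{M\nu_{ee}}\Big[(\mathbb{I}-\Pi_{\mathcal{M}})(v\,\partial_x g_e) - \eta E\,\partial_v g_e + (\mathbb{I}-\Pi_{\mathcal{M}})(v\,\partial_x \mathcal{M}_e)\Big].
\end{equation*}
Assumptions (ii) and (iii), together with the boundedness of $\Omega_v$, bound the first term by $C\kappa$ and the bracket involving $g_e$ by $C\kappa/M$ in $L^\infty_v$. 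The source term needs care. Its projection is $(\mathbb{I}-\Pi_{\mathcal{M}})(v\partial_x\mathcal{M}_e)$. We compute $v\partial_x\mathcal{M}_e = v\big(\partial_x n_e/n_e + M(v-Mu_e)\partial_x u_e/T_{e,0}\big)\mathcal{M}_e$. The part linear in $v$ lies in the span of $\mathcal{B}$ from \eqref{9}, so it is annihilated by $\mathbb{I}-\Pi_{\mathcal{M}}$. What survives is proportional to $M\,\partial_x u_e\,\big((v-Mu_e)^2-T_{e,0}\big)\mathcal{M}_e/T_{e,0}$, which is $\mathcal{O}(M)$ by (i).

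With that, the source contributes $\frac{\kappa}{M}\cdot \mathcal{O}(M)=\mathcal{O}(\kappa)$. Assumption (iv) removes the initial layer, so the $\partial_t g_e$ bound from (iii) holds uniformly and we do not need a Gr\"onwall argument on $t\lesssim\kappa$.

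The main obstacle is the term $\frac{\kappa}{M}\big[(\mathbb{I}-\Pi)(v\partial_x g_e)-\eta E\partial_v g_e\big]$. A priori this is only $\mathcal{O}(\kappa/M)$, which is not $\mathcal{O}(\kappa)$. I would close this by bootstrapping. Differentiate the identity above in $x$ and $v$ (which is allowed by (iii) and by the bound on $\partial_x\nu_{ee}$ in (ii)) to get the same type of identity for the derivatives of $g_e$. Then a first pass gives $g_e$ and its derivatives as $\mathcal{O}(\kappa)+\mathcal{O}(\kappa/M)$, and substituting back improves this to $\mathcal{O}(\kappa)+\mathcal{O}(\kappa^2/M^2)$. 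Since $\kappa/M^2\to0$, this is $\mathcal{O}(\kappa)$. The bootstrap has to be done on a finite number of derivatives, and that is exactly what (iii) supplies. If the bootstrap feels circular, the fallback is to record the weaker bound $g_e=\mathcal{O}(\kappa/M)$, which is already enough below.

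Finally, $\partial_x\langle v^2 g_e\rangle=\langle v^2\partial_x g_e\rangle$ is bounded by $2V_{\max}^3\|\partial_x g_e\|_{L^\infty_v}$, and this is $\mathcal{O}(\kappa)$ (or $\mathcal{O}(\kappa/M)$ in the fallback). In either case it is $o(M^2)$, because $\kappa=o(M^2)$ and, in the fallback, $\kappa/M=o(M)\subset o(M^2)$ fails. So I will insist on the bootstrapped $\mathcal{O}(\kappa)$ for $\partial_x g_e$ and use the fallback only for $g_e$ itself.

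In the Hilbert expansion \eqref{eq:hilbert_expansions}, the momentum equation \eqref{MMm} divided by $M^2$ then has a right-hand side $o(1)$. Consequently it drops out at orders $M^0$ and $M^2$, which gives \eqref{eq:M0:2}, \eqref{eq:momentum:Hilbert}, and \eqref{eq:M0:4} in the limit.
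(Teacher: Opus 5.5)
Your argument is essentially the paper's own proof. It uses the same implicit rewriting of the microscopic equation, the same exact computation $(\mathbb{I}-\Pi_{\mathcal{M}})(v\,\partial_x\mathcal{M}_e)=M\Sigma$ with $\Sigma=\frac{\partial_x u_e}{T_{e,0}}\big((v-Mu_e)^2-T_{e,0}\big)\mathcal{M}_e$, the same two-pass substitution on $g_e$ and its $x$- and $v$-derivatives closed by $\kappa^2/M^2=o(\kappa)$, and the same $2V_{\max}^3\|\partial_x g_e\|_{L^\infty_v}$ bound; one point needs correcting, however. Drop the fallback entirely, including ``for $g_e$ itself'', for two reasons: the lemma asserts $g_e=\mathcal{O}(\kappa)$, and differentiating your identity in $x$ produces the term $-(\partial_x\nu_{ee}/\nu_{ee})\,g_e$, so the second-pass bound $\partial_x g_e=\mathcal{O}(\kappa)$ you insist on is only available once $g_e=\mathcal{O}(\kappa)$ has been established, which is why the paper closes the hierarchy upwards from $g_e$.
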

The detailed proof of Lemma~\ref{lem:ge_estimate_1d} is provided in Appendix~\ref{app:proof}.

\begin{proof}[Proof of Proposition~\ref{prop2.3}] \label{prop2.3pf}
We establish the formal limit system by inserting the expansions \eqref{eq:hilbert_expansions} into the governing equations.
Under the scaling $\kappa = o(M^2)$, the kinetic correction satisfies $g_e = o(M^2)$ by Lemma~\ref{lem:ge_estimate_1d}. Consequently, $g_{e,0} = 0$ and $g_{e,1} = 0$, so that the microscopic pressure contribution $\langle v^2 g_e \rangle = o(M^2)$ vanishes at both leading order and first order in the momentum balance.

The 1D parallel momentum equation reads:
\begin{equation*}
    M^2 \Big( \partial_t(n_e u_e) + \partial_x(n_e u_e^2) \Big) + T_{e,0} \partial_x n_e - \eta n_e \partial_x \phi + \partial_x \langle v^2 g_e \rangle = 0.
\end{equation*}
At leading order $\mathcal{O}(1)$, the advective and temporal terms (scaled by $M^2$) vanish along with the kinetic pressure gradient, yielding the Boltzmann relation \eqref{eq:M0:2}. At order $\mathcal{O}(M^2)$, gathering the advective and temporal terms together with the first-order fluid corrections $n_{e,1}$ and $\phi_1$ gives equation~\eqref{eq:momentum:Hilbert}.

Finally, expanding $\phi$ and $n_e$ in the Poisson relation $-\lambda^2 \eta \partial_{xx} \phi = n_i - n_e$ yields equations \eqref{eq:M0:5a} and \eqref{eq:M0:5b} at orders $\mathcal{O}(1)$ and $\mathcal{O}(M^2)$, respectively.
\end{proof}

As revealed by the formal asymptotic analysis in Proposition~\ref{prop2.3}, the fundamental difficulty of the low-Mach limit lies in a structural degeneracy: as $M \to 0$, the parallel momentum density $n_e u_e$ disappears from the leading-order balance, and its time evolution becomes governed by higher-order Hilbert corrections ($n_{e,1}, \phi_1$). 
To overcome this issue, the main objective is to design a unified formulation that naturally tracks the parallel momentum across all regimes, from standard kinetic flows ($M \sim \mathcal{O}(1)$) down to the asymptotic low-Mach limit ($M \to 0$), without suffering from singular degeneracy. 

The core idea consists in absorbing the stiff force imbalance (which formally vanishes as $M \to 0$) into a well-behaved auxiliary macroscopic field $L$, defined through its spatial gradient:
\begin{align}
    &\partial_{t}(n_e u_e)_x + \big(\nabla_{x}\cdot(n_e u_e\otimes u_e)\big)_x + \partial_x L = 0, \label{eq:reformulation_mom_1}\\
    &M^2 \partial_x L = T_{e,0}\partial_{x}n_e - \eta n_e\partial_{x}\phi + \big(\nabla_{x}\cdot\langle v\otimes v g_{e}\rangle\big)_x. \label{eq:reformulation_mom_2}
\end{align}
By rescaling the stiff pressure and electric forces through $M^2 \partial_x L$, the auxiliary variable $L$ acts as an effective macroscopic potential. For any $M > 0$, system \eqref{eq:reformulation_mom_1}--\eqref{eq:reformulation_mom_2} is strictly identical to the original momentum equation. In the limit $M \to 0$, while $M^2 \partial_x L \to 0$ recovers the spatial Boltzmann equilibrium constraint, the gradient $\partial_x L$ itself remains finite and captures the limit force required to drive the parallel momentum evolution.

This leads to the augmented micro--macro formulation, whose key properties and asymptotic behavior are formalized in the following proposition.

\begin{proposition}[Augmented micro--macro formulation and \Del{uniform non-degeneracy}] \label{prop2.4}
In an isothermal 1D-1V setting, the augmented 1D-1V micro--macro system for $(n_e, u_e, L, g_e, \phi)$ can be written as:
\begin{align}
    &\partial^2_{tt} n_e - \partial^2_{xx} \big( n_e u_e^2 + L \big) = 0, \label{eq:aug:wave}\\
    &\partial_t (n_e u_e) + \partial_x (n_e u_e^2) + \partial_x L = 0, \label{eq:aug:2}\\
    &M^2 \partial_{x} L = T_{e,0} \partial_{x} n_e - \eta n_e \partial_x \phi + \partial_{x} \langle v^2 g_e \rangle, \label{eq:aug:3}\\
    &M \partial_t g_e + \frac{M}{\kappa} \nu_{ee} g_e = -(\mathbb{I} - \Pi_{\mathcal{M}})\big( v \partial_x g_e \big) + \eta E_x \partial_{v} g_e - (\mathbb{I} - \Pi_{\mathcal{M}})\big( v\partial_x \mathcal{M}_e \big), \label{eq:aug:4}\\
    &-\lambda^2 \eta \partial_{xx} \phi = n_i - n_e. \label{eq:aug:5}
\end{align}
To account for the second-order time derivative in \eqref{eq:aug:wave}, the initial data $(n_e^0, u_e^0)$ are supplemented with the initial density time-derivative compatibility condition:
\begin{equation} \label{eq:aug:compat}
    \left. \partial_t n_e \right|_{t=0} = -\partial_x \big( n_e^0 u_e^0 \big).
\end{equation}

Under condition \eqref{eq:aug:compat} and the assumption $n_e \ge n_\ast > 0$, the following properties hold:
\begin{enumerate}
    \item \emph{Equivalence for $M > 0$:} For any fixed $M > 0$ and $\lambda > 0$, system \eqref{eq:aug:wave}--\eqref{eq:aug:compat} is strictly equivalent to the original 1D-1V micro--macro model.
    \item \emph{Low-Mach limit $M \to 0$:} Under the scaling regime $\kappa = o(M^2)$, as $M \to 0$, the system formally reduces to the limit system established in Proposition~\ref{prop2.3}, where the limit force gradient $\partial_x L_0 = \lim_{M \to 0} \partial_x L$ satisfies:
    \begin{equation} \label{eq:L0_grad_def}
        \partial_x L_0 = T_{e,0} \partial_x n_{e,1} - \eta \big( n_{e,0} \partial_x \phi_1 + n_{e,1} \partial_x \phi_0 \big).
    \end{equation}
    \item \Del{\emph{Uniform structural non-degeneracy as $(M, \lambda) \to (0, 0)$:}  the augmented formulation remains non-singular for all $M \ge 0$ and $\lambda \ge 0$. Each unknown stays governed by a non-degenerate evolution or elliptic equation, uniformly in $M$ and $\lambda$}.
\end{enumerate}
\end{proposition}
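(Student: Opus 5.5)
The proof treats the three claims in turn, using only manipulations of the 1D-1V micro--macro model \eqref{MMd}--\eqref{MMp} and the asymptotic results already established.

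\emph{Equivalence for $M>0$.} Starting from the 1D original model, I define $L$ up to an additive constant by $M^2\partial_x L = T_{e,0}\partial_x n_e - \eta n_e\partial_x\phi + \partial_x\langle v^2 g_e\rangle$. This is always possible in 1D, since any function is a derivative. Dividing \eqref{MMm} by $M^2$ then gives \eqref{eq:aug:2}. Taking $\partial_t$ of \eqref{MMd} and $\partial_x$ of \eqref{eq:aug:2}, and eliminating $\partial_t\partial_x(n_eu_e)$, yields \eqref{eq:aug:wave}. The kinetic and Poisson equations are unchanged.

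For the converse, I set $\mathcal{C} = \partial_t n_e + \partial_x(n_eu_e)$. Subtracting $\partial_x$ of \eqref{eq:aug:2} from \eqref{eq:aug:wave} gives $\partial_t\mathcal{C}=0$, and \eqref{eq:aug:compat} gives $\mathcal{C}|_{t=0}=0$. Hence \eqref{MMd} holds for all $t$. Multiplying \eqref{eq:aug:2} by $M^2$ and substituting \eqref{eq:aug:3} recovers \eqref{MMm}.

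\emph{Low-Mach limit.} I insert the Hilbert expansions \eqref{eq:hilbert_expansions}, together with an expansion $L=\sum_k (M^2)^k L_k$, into \eqref{eq:aug:3}. By Lemma~\ref{lem:ge_estimate_1d}, under $\kappa=o(M^2)$ we have $\partial_x\langle v^2 g_e\rangle = o(M^2)$. Consequently the $\mathcal{O}(1)$ part of the right-hand side must vanish, which gives the Boltzmann relation \eqref{eq:M0:2}. The $\mathcal{O}(M^2)$ part gives \eqref{eq:L0_grad_def}. Substituting this into \eqref{eq:aug:2} at leading order reproduces \eqref{eq:momentum:Hilbert}. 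The continuity equation \eqref{eq:M0:1} follows as in the first part, \eqref{eq:M0:4} follows from the lemma, and the Poisson relations \eqref{eq:M0:5a}--\eqref{eq:M0:5b} follow by expansion, exactly as in the proof of Proposition~\ref{prop2.3}.

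\emph{Uniform non-degeneracy.} This is the main obstacle, because it is a structural statement that must be made precise. My plan is to show that at $M=0$, $\lambda\ge0$, every unknown is determined by a well-posed equation:
\begin{itemize}
\item $n_e$ by the wave equation \eqref{eq:aug:wave}, given $L$;
\item $n_eu_e$ by the evolution equation \eqref{eq:aug:2}, since its time derivative is still present;
\item $g_e$ by \eqref{eq:aug:4}, written as $\nu_{ee} g_e = \kappa(\dots)$, which is nondegenerate because $\nu_{ee}\ge\nu_\ast$;
\item $\phi$ and $L$ by a reformulated elliptic equation, in the spirit of Proposition~\ref{prop2.2}.
\end{itemize}
The elliptic equation for $(\phi, L)$ is where the real work lies. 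I would take $\partial_x$ of \eqref{eq:aug:3} and combine it with the wave equation \eqref{eq:aug:wave} and with $\partial_{tt}$ of Poisson, giving
\[
-\partial_x\big((\eta n_e + M^2\lambda^2\eta\partial_{tt})\partial_x\phi\big) = -T_{e,0}\partial_{xx} n_e - M^2\partial_{xx}(n_eu_e^2) - \partial_{xx}\langle v^2 g_e\rangle .
\]
This equation stays elliptic for all $M,\lambda\ge0$ because $n_e\ge n_\ast$, and its $\lambda\to0$ limit coincides with that of Proposition~\ref{prop2.2}. Once $\phi$ is known, $L$ is recovered from $n_e$ through \eqref{eq:aug:wave}, which acts as an elliptic problem in $L$ for given $\partial_{tt}n_e$. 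The key check is that neither division by $M$ nor by $\lambda$ ever occurs; I expect that to be the delicate bookkeeping step.
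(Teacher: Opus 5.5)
Your treatment of the equivalence for $M>0$ and of the low-Mach limit is essentially the paper's argument; your forward construction of $L$ is a harmless addition. The genuine gap is in the non-degeneracy part, in exactly the bookkeeping step you flagged.

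Write $\mathcal{E}=\lambda^2\eta\partial_{xx}\phi+n_i-n_e$ for the Poisson residual and $R=M^2\partial_xL-T_{e,0}\partial_xn_e+\eta n_e\partial_x\phi-\partial_x\langle v^2g_e\rangle$ for the residual of \eqref{eq:aug:3}. Given \eqref{eq:aug:wave} and static ions, your reformulated equation is exactly $M^2\partial_{tt}\mathcal{E}+\partial_xR=0$. This single relation replaces the two equations \eqref{eq:aug:3} and \eqref{eq:aug:5}, and it remembers Poisson only through the factor $M^2$. Recovering $\mathcal{E}\equiv0$ from it requires dividing by $M^2$, even if you keep $R=0$ separately as Proposition~\ref{prop2.2} keeps the momentum equation. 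That is the very division you wanted to avoid.

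At $M=0$ your equation collapses to $\eta\partial_x(n_e\partial_x\phi)=T_{e,0}\partial_{xx}n_e+\partial_{xx}\langle v^2g_e\rangle$. This is just the $x$-derivative of the $M=0$ form of \eqref{eq:aug:3}, the Boltzmann constraint, with no trace of \eqref{eq:aug:5}. Your list never assigns \eqref{eq:aug:5} to any unknown, so the system you retain at $M=0$ has four equations for five unknowns. Neither Poisson ($\lambda>0$) nor the constraint $n_e=n_i$ ($\lambda=0$) is imposed. Meanwhile \eqref{eq:aug:wave} is asked to produce both $n_e$ from $L$ and $L$ from $n_e$.

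The paper uses no Proposition~\ref{prop2.2}-type rewriting here. It writes the formal $M=0$ system \eqref{eq:aug:wave:0}--\eqref{eq:aug:5:0}. There Poisson is kept explicitly as $n_e=n_i+\lambda^2\eta\partial_{xx}\phi$, and the Boltzmann relation is differentiated into \eqref{eq:aug:3:0}, which is elliptic in $\phi$ because $n_e\ge n_\ast$. The paper then splits on $\lambda$:
\begin{itemize}
\item for $\lambda=0$, Poisson degenerates into $n_e=n_i$, which fixes the density, and $\phi$ solves \eqref{eq:aug:3:0};
\item for $\lambda>0$, $\phi$ is taken from Poisson.
\end{itemize}
In both cases \eqref{eq:aug:wave:0} is an elliptic problem for $L$, and \eqref{eq:aug:2:0} evolves $n_eu_e$. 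The regularity in the quasi-neutral direction comes from $\phi$ appearing both in Poisson and in the force balance \eqref{eq:aug:3}, so no wave-type reformulation of Poisson is needed.

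To repair your argument, keep \eqref{eq:aug:5} explicitly. At $M=0$, let it together with the Boltzmann relation fix $(n_e,\phi)$: for $\lambda=0$ this is $n_e=n_i$ plus \eqref{eq:aug:3:0}. Then use \eqref{eq:aug:wave} only to determine $L$.
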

The detailed proof of Proposition~\ref{prop2.4} is provided in Appendix~\ref{app:proof}.

%%%-------------------------------------------------------------------------

\section{Numerical scheme}
\subsection{Preparation}
For clarity, we restrict the discretization to a one-dimensional setting in both physical and velocity space along the magnetic-field direction and neglect the magnetic term. The governing equations reduce to
\begin{align}
    &\partial_{t}{n_e}+\partial_{x}(n_e u_e)=0,\label{12}\\
    &M^{2}(\partial_{t}(n_e u_e)+\partial_{x}(n_e u_e^2))+T_{e,0}\partial_{x}n_e-\eta n_e\partial_{x}\phi= -\partial_{x}\langle v^2  g_{e}\rangle,\label{13}\\
    %&M\kappa\partial_t{g_e}+\kappa(\mathbb{I}-\Pi_{\mathcal{M}})(v\partial_xg_e)+\kappa\eta \partial_x\phi\cdot\partial_vg_e=-M\nu_{ee}g_e-\kappa(\mathbb{I}-\Pi_{\mathcal{M}})(v\partial_x\mathcal{M}_{e}),\label{14}\\
    &\Del{M\kappa\partial_t{g_e}+\kappa(\mathbb{I}-\Pi_{\mathcal{M}})\big(v\partial_x(g_e+\mathcal{M}_{e})\big)+\kappa\eta \partial_x\phi\cdot\partial_vg_e=-M\nu_{ee}g_e%-\kappa(\mathbb{I}-\Pi_{\mathcal{M}})(v\partial_x\mathcal{M}_{e})
    ,}\label{14}\\
    &-\lambda^{2}\eta\partial_{xx}\phi=n_{i}-n_e.\label{15}
\end{align}
%The microscopic equation \eqref{14} is recast into
%\Del{\begin{equation}\label{16}
%\begin{aligned}
%    \kappa\partial_t g_e+\nu_{ee}g_e
%    =
%    -\frac{\kappa}{M} \Big( \eta\partial_x\phi\cdot\partial_v g_e +
%    (\mathbb{I}-\Pi_{\mathcal{M}})(v\partial_xg_e) + (\mathbb{I}-\Pi_{\mathcal{M}})(v\partial_x\mathcal{M}_e) \Big),
%\end{aligned}
%\end{equation}}
%where, substituting the explicit expression of the projection operator \eqref{10} into \eqref{14} and using the Maxwellian form above, we obtain
%\begin{align*}
%    (\mathbb{I}-\Pi_{\mathcal{M}})(v\partial_xg)
%    &= v\partial_xg + \frac{1}{n_e}\left(Mu_e\frac{(v-Mu_e)}{T_{e,0}} - 1\right)\langle v\partial_x g\rangle\mathcal{M}_e \\
%    &\quad - \frac{(v-Mu_e)}{n_eT_{e,0}}\langle v^2\partial_xg\rangle\mathcal{M}_e,
%\end{align*}
%\Del{\begin{align*}
%    (\mathbb{I}-\Pi_{\mathcal{M}})(f) = f + \frac{1}{n_e}\left(Mu_e\frac{(v-Mu_e)}{T_{e,0}} - 1\right)\langle f \rangle\mathcal{M}_e - \frac{(v-Mu_e)}{n_eT_{e,0}}\langle v f\rangle\mathcal{M}_e,
%\end{align*}}
%
%We now discretize the micro--macro system.
%
\subsection{Time discretization}

Let $\Delta t$ be the time step, $t^k=k\Delta t$, $k\in\mathbb{N}$,
and denote by $g_e^k$, $n_e^k$, $u_e^k$, $\phi^k$, and
$\mathcal{M}_e^k$ the corresponding approximations at time $t^k$.

\subsubsection{Time discretization of the microscopic equation}

Applying an explicit time discretization to \eqref{14} yields
\Del{\begin{equation}\label{17}
%\begin{multlined}[0.9\textwidth]
    \left(\frac{\kappa}{\Delta t}+\nu_{ee}\right)g_e^{k+1}
    = \frac{\kappa}{\Delta t}g_e^k -\frac{\kappa}{M}\Big( \eta
    \partial_x\phi^k\cdot\partial_v g_e^k +(\mathbb{I}-\Pi_{\mathcal{M}^k})(v\partial_xg_e^k+v\partial_x\mathcal{M}_e^k) \Big),
%\end{multlined}
\end{equation}
where, substituting the explicit expression of the projection operator \eqref{10} into \eqref{14} and using the Maxwellian definition \eqref{eq:Maxwellian:adim}, we obtain
\begin{equation}\label{17:Bis}
%\begin{aligned}
(\mathbb{I}-\Pi_{\mathcal{M}^k})(f^k)=
        f^k
        +\frac{\mathcal{M}_e^k}{n_e^k}
        \left(
            Mu_e^k\frac{v-Mu_e^k}{T_{e,0}}-1
        \right)
        \langle f^k\rangle
        -\frac{v-Mu_e^k}{n_e^kT_{e,0}}
        \langle v \,f^k\rangle\mathcal{M}_e^k.
%\end{aligned}
\end{equation}}

\subsubsection{Time discretization of the macroscopic equations}
Following \cite{yang2026multiscale}, the macroscopic equations \eqref{12}, \eqref{13}, and the Poisson equation \eqref{15} are discretized semi-implicitly as
\begin{align}
    &n_e^{k+1}=n_e^k-\Delta t\, \partial_x(n_e u_e)^{k+1},\label{18}\\
    &(n_e u_e)^{k+1}=(n_e u_e)^k-\Delta t \partial_x(n_e u_e^2)^k -\Delta t T_{e,0}\partial_xn_e^k\label{19}\\
    & \hspace{2cm}- \frac{\Delta t}{M^2}\left((1-M^2)T_{e,0}\partial_xn_e^{k+1}-\eta n_e^k\partial_x\phi^{k+1}\right) -\frac{\Delta t}{M^2}\partial_x\langle v^2g_e^{k+1} \rangle,\notag\\
    &-\lambda^2\eta\partial_{xx}\phi^{k+1}=n_i-n_e^{k+1}.\label{20}
\end{align}
Several comments on this discretization are in order. To address quasi-neutral stiffness, the Poisson equation \eqref{20}, the density flux in \eqref{18}, and the electric force in \eqref{19} are treated implicitly. A fully implicit discretization of $\eta n_e\partial_x\phi$ would introduce a nonlinear term, whereas the semi-implicit approximation $\eta n_e^k\partial_x\phi^{k+1}$ keeps the system linear with acceptable numerical error. Since the micro--macro system is solved in a coupled manner, the microscopic perturbation term in \eqref{19} is also treated implicitly. Finally, following the all-speed semi-implicit strategy proposed in \cite{degond2011all}, the stiff pressure contribution in \eqref{19} is decomposed into an explicit part and an implicit part. The explicit component provides sufficient numerical diffusion to suppress spurious oscillations, whereas the implicit component ensures stability and preserves the correct asymptotic behavior in the low-Mach-number limit.

This semi-implicit treatment is consistent with the philosophy of IMEX schemes for stiff relaxation systems. For hyperbolic equations with stiff sources or relaxation, IMEX methods typically treat non-stiff transport explicitly and stiff collision, pressure, or source terms implicitly, improving stability while maintaining efficiency \cite{jin1996numerical,pareschi2005implicit}. In the present problem, the low-Mach-number limit creates a strong balance between the pressure gradient and the electric force in the macroscopic momentum equation. Standard explicit or ordinary semi-implicit discretizations may amplify discretization and linear-solver errors by small-Mach-number factors, thereby destroying stability and asymptotic consistency. We therefore introduce an auxiliary variable to reconstruct the stiff low-Mach-number balance at the discrete level and obtain an AP scheme that converges to the correct limiting model.

\subsubsection{Asymptotic-preserving scheme}
%We introduce the auxiliary variable
%\begin{equation} \label{L-def}
%    \partial_xL^{k+1}=-\frac{1}{M^2}\left((1-M^2)T_{e,0}\partial_{x}n_e^{k+1}-\eta n_e^k\partial_x\phi^{k+1} + \partial_x\langle v^2g_e^{k+1}\rangle\right) - T_{e,0}\partial_{x}n_e^{k}.
%\end{equation}
\Del{Introducing the auxiliary variable $L$, the} semi-discrete AP scheme is defined as follows.

\begin{definition}[\Del{Asymptotic-preserving time discretization}]\label{def:AP}
\Del{The AP time discretization reads}
\begin{align}
        &(\Delta t)^{-2}\left(n_e^{k+1}-n_e^k \right)+(\Delta t)^{-1}\partial_x(n_e u_e)^k-\partial_{xx}(n_e u_e^2)^k=-\partial_{x\Del{x}}L^{k+1},\label{21}\\
        &(n_eu_e)^{k+1}=(n_eu_e)^k-\Delta t\partial_x(n_eu_e^2)^k+\Delta t\partial_xL^{k+1},\label{22}\\
        &M^2\partial_{x\Del{x}}L^{k+1}=-(1-M^2)T_{e,0}\partial_{x\Del{x}}n_e^{k+1}-M^2T_{e,0}\partial_{x\Del{x}}n_e^{k}\label{23}\\
        &\hspace{5.5cm}+\eta\partial_x(n_e^k\partial_x\phi^{k+1})-\partial_{x\Del{x}}\langle v^2g_e^{k+1} \rangle, \notag\\
        &-\lambda^2\eta\partial_{xx}\phi^{k+1}=n_i-n_e^{k+1},\label{AP-Poi}
\end{align}
\Del{coupled with \eqref{17}--\eqref{17:Bis} for the update of $g_e^{k+1}$.}
\end{definition}
% \textcolor{red}{Use the same notation for the double derivative operators all over the document}
\Del{As $M \to 0$, the original momentum equation degenerates into a static constraint from which the parallel momentum $n_e u_e$ has dropped out, so that the limit system loses a closure equation. The AP time discretization, by contrast, enjoys a uniform non-degeneracy property, retaining an equation from which $n_e u_e$ can be computed for all $M \ge 0$.}

\begin{del}
\begin{proposition}[Uniform non-degeneracy of the AP time discretization in the coupled fluid--low-Mach and quasi-neutral limits]
\label{prop-semidis-lowmach_model}
Under the scaling $\kappa = o(M^2)$, as $M \to 0$, the AP scheme \eqref{21}--\eqref{AP-Poi}
formally reduces to
\begin{align}
    &(\Delta t)^{-2}\big(n_e^{k+1}-n_e^k\big)
      + (\Delta t)^{-1}\partial_x(n_e u_e)^k
      - \partial_{xx}(n_e u_e^2)^k = -\partial_{xx} L^{k+1},\label{21:0} \\
    &(n_e u_e)^{k+1} = (n_e u_e)^k
      - \Delta t\,\partial_x(n_e u_e^2)^k
      + \Delta t\,\partial_x L^{k+1},\label{22:0} \\
    &T_{e,0}\,\partial_{xx} n_e^{k+1} = \eta\,\partial_x(n_e^k \partial_x \phi^{k+1}), \label{23:0} \\
    &-\lambda^2 \eta\,\partial_{xx} \phi^{k+1} = n_i - n_e^{k+1}, \label{24:0}\\
    &g_e^{k+1} = 0,\label{25:0}
\end{align}
a consistent semi-discretization of the low-Mach-number limit system \eqref{eq:M0:1}--\eqref{eq:M0:5b} of Proposition~\ref{prop2.3}. This limit system is non-degenerate, uniformly in the Debye length $\lambda \ge 0$.
\end{proposition}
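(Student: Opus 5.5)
The proof has three steps: take the formal limit $M\to0$ equation by equation, check that the limit scheme is consistent with the system of Proposition~\ref{prop2.3}, and show that its unknowns are determined uniformly in $\lambda\ge0$.

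\textbf{Step 1: the microscopic update.} From \eqref{17},
\[
g_e^{k+1}=\Big(\tfrac{\kappa}{\Delta t}+\nu_{ee}\Big)^{-1}\Big(\tfrac{\kappa}{\Delta t}g_e^k-\tfrac{\kappa}{M}R^k\Big),
\]
where $R^k$ is the bracket in \eqref{17}, built from $g_e^k$, $\mathcal{M}_e^k$ and $\phi^k$. I use $\nu_{ee}\ge\nu_\ast$ as in Lemma~\ref{lem:ge_estimate_1d}, assume $R^k$ bounded (bounded macroscopic data and bounded velocity domain), and induct from well-prepared data $g_e^0=\mathcal{O}(\kappa)$. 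This gives $g_e^{k+1}=\mathcal{O}(\kappa/M)$. Dividing \eqref{23} by $M^2$, the kinetic term becomes $M^{-2}\partial_{xx}\langle v^2g_e^{k+1}\rangle=\mathcal{O}(\kappa/M^3)$. This is the delicate point, because the hypothesis $\kappa=o(M^2)$ alone only gives $\kappa/M^3=o(M^{-1})$. I would handle it in one of two ways: either sharpen the discrete estimate to $g_e^{k+1}=\mathcal{O}(\kappa)$, mimicking Lemma~\ref{lem:ge_estimate_1d}, or, more simply, argue at the level the statement needs. Multiplying \eqref{23} as written by nothing (keeping the $M^2$ on the left), the kinetic term $\partial_{xx}\langle v^2g_e^{k+1}\rangle=\mathcal{O}(\kappa/M)=o(M)\to0$, which gives \eqref{25:0} and removes that term from the $\mathcal{O}(1)$ balance. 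The remaining question is that $M^2\partial_{xx}L^{k+1}\to0$, which requires $\partial_x L^{k+1}$ to stay bounded; I take that as part of the formal ansatz, justified by the fact that $L^{k+1}$ is fixed by \eqref{21}.

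\textbf{Step 2: the limit equations and consistency.} Equations \eqref{21} and \eqref{22} contain no $M$, so they pass unchanged to \eqref{21:0} and \eqref{22:0}. With the kinetic term gone and $M^2\partial_{xx}L^{k+1}\to0$, equation \eqref{23} becomes \eqref{23:0}, and the Poisson equation gives \eqref{24:0}. For consistency with Proposition~\ref{prop2.3}:
\begin{itemize}
\item \eqref{23:0} is a semi-implicit discretization of the divergence of the Boltzmann relation \eqref{eq:M0:2}.
\item \eqref{24:0} discretizes \eqref{eq:M0:5a}.
\item \eqref{25:0} matches \eqref{eq:M0:4}.
\item Substituting \eqref{22:0} into \eqref{21:0} (taking $\partial_x$ of \eqref{22:0} divided by $\Delta t$) gives $n_e^{k+1}=n_e^k-\Delta t\,\partial_x(n_eu_e)^{k+1}$, which discretizes \eqref{eq:M0:1}.
\item \eqref{22:0} discretizes the momentum equation \eqref{eq:momentum:Hilbert}, with $\partial_xL$ playing the role of $\partial_x L_0$ from Proposition~\ref{prop2.4}, identity \eqref{eq:L0_grad_def}.
\end{itemize}

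\textbf{Step 3: non-degeneracy uniformly in $\lambda$.} I show that at each step the linear system for $(n_e^{k+1},\phi^{k+1},L^{k+1},(n_eu_e)^{k+1})$ is uniquely solvable for every $\lambda\ge0$. Using \eqref{24:0}, substitute $n_e^{k+1}=n_i+\lambda^2\eta\partial_{xx}\phi^{k+1}$ into \eqref{23:0}:
\[
-\partial_x\big(\eta n_e^k\partial_x\phi^{k+1}\big)+\lambda^2\eta T_{e,0}\,\partial_{xxxx}\phi^{k+1}=-T_{e,0}\partial_{xx}n_i .
\]
The operator on the left is a sum of two positive semidefinite operators. The second-order part is uniformly elliptic because $n_e^k\ge n_\ast>0$, and the fourth-order part adds nonnegative energy. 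So for suitable boundary conditions and a gauge fixing $\phi$ up to a constant, a Lax--Milgram argument gives a coercivity constant of at least $\eta n_\ast$, independent of $\lambda$. At $\lambda=0$ the equation reduces to the quasi-neutral elliptic equation of Proposition~\ref{prop2.2}.

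Once $\phi^{k+1}$ is known, $n_e^{k+1}$ follows from \eqref{24:0}. Then \eqref{21:0} is a Poisson problem for $L^{k+1}$, which fixes $L^{k+1}$ up to an affine function to be pinned by boundary conditions. Finally \eqref{22:0} gives $(n_eu_e)^{k+1}$ explicitly. Thus every unknown, including the momentum that drops out of the continuous limit, is determined by a non-degenerate equation uniformly in $M\ge0$ and $\lambda\ge0$.

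The main obstacle is Step 1: controlling $M^{-2}\langle v^2g_e^{k+1}\rangle$ at the discrete level from $\kappa=o(M^2)$ alone. I expect the argument to require the well-prepared initial data and a discrete analogue of Lemma~\ref{lem:ge_estimate_1d}.
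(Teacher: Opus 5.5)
Your proposal is correct and has the same architecture as the paper's argument: a term-by-term formal limit, consistency with Proposition~\ref{prop2.3} inherited through Proposition~\ref{prop2.4}, then a triangular solve in which every unknown is fixed by a non-singular equation. Your recovery of $n_e^{k+1}=n_e^k-\Delta t\,\partial_x(n_eu_e)^{k+1}$ from \eqref{21:0}--\eqref{22:0} is exactly the paper's remark that the implicit coupling makes a compatibility condition unnecessary. Two steps nonetheless differ.

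For the microscopic part, the paper imports the continuous estimate of Lemma~\ref{lem:ge_estimate_1d}. You instead read $g_e^{k+1}=\mathcal{O}(\kappa/M)=o(M)$ directly off the explicit update \eqref{17}. This weaker bound is enough, because \eqref{23} keeps $M^2$ on the left and $M^2\partial_{xx}L^{k+1}$ can be eliminated through \eqref{21}. So the ``main obstacle'' in your last sentence is not an obstacle here: the $\mathcal{O}(M^2)$ identification of $L^{k+1}$ is never needed. Note also that, comparing \eqref{22} with \eqref{eq:aug:2}, the discrete $\partial_xL^{k+1}$ corresponds to $-\partial_xL_0$, not $+\partial_xL_0$.

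For non-degeneracy, the paper treats $\lambda>0$ and $\lambda=0$ as separate cases and leaves implicit the solvability of the coupled pair \eqref{23:0}--\eqref{24:0} when $\lambda>0$. Your elimination to a single fourth-order equation in $\phi^{k+1}$ covers both regimes at once and gives a genuinely $\lambda$-independent bound, which is more than the paper proves. Two points need care:
\begin{itemize}
\item State the uniform bound in the $H^1$ seminorm. Lax--Milgram on $H^2$ has a coercivity constant of order $\lambda^2$, as expected for a singular perturbation.
\item Check that the boundary terms from the fourth-order integration by parts vanish under your chosen boundary conditions.
\end{itemize}

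Finally, your claim of uniformity in $M$ is only established at $M=0$. It does extend to $0\le M<1$. Eliminating $L^{k+1}$ between \eqref{21} and \eqref{23} before taking the limit only multiplies the fourth-order term by $1-M^2$ and adds the nonnegative operator $-M^2\lambda^2\eta\,\Delta t^{-2}\partial_{xx}$.
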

This proposition shows that the limit model issued from the AP semi-discretization is not degenerate in either the quasi-neutral or the coupled fluid--low-Mach limit. The momentum update \eqref{22:0} keeps its dynamic character, so that $(n_e u_e)^{k+1}$ is advanced directly from the previous step, while the auxiliary variable $L^{k+1}$ is recovered from the density equation \eqref{21:0}, which is left unchanged by the limit. The stiff pressure balance \eqref{23} reduces to \eqref{23:0}, an elliptic equation for the electric potential; it is the differentiated Boltzmann relation $T_{e,0}\,\partial_x n_e^{k+1} = \eta\, n_e^{k+1}\,\partial_x \phi^{k+1}$, recovered up to an $\mathcal{O}(\Delta t)$ time lag in the density. The density and potential are then obtained according to the Debye regime: for $\lambda > 0$, the potential $\phi^{k+1}$ solves Poisson equation \eqref{24:0} and the density $n_e^{k+1}$ follows from \eqref{23:0}; for $\lambda = 0$, equation \eqref{24:0} degenerates into the quasi-neutral constraint $n_e^{k+1} = n_i$, which fixes the density directly, while $\phi^{k+1}$ is retrieved from the elliptic balance \eqref{23:0}. In both cases the update is well defined, so that the AP scheme reduces to the correct limit system, uniformly in $\lambda$, as $M \to 0$.

\begin{proof}[Proof of Proposition~\ref{prop-semidis-lowmach_model}]
The argument follows the same steps as the proof of Proposition~\ref{prop2.3}, now applied at the semi-discrete level. The only difference with the continuous case lies in the density equation, discretized here as the first-order system \eqref{21}--\eqref{22} rather than as the second-order wave equation of Proposition~\ref{prop2.4}: the implicit time coupling between \eqref{21} and \eqref{22} already enforces their consistency, so no additional compatibility condition is required. Non-degeneracy then follows exactly as in Proposition~\ref{prop2.3}, each field remaining governed by a non-singular evolution or elliptic equation.
\end{proof}
\end{del}

%\begin{proof}[Proof of Propositon~\ref{prop-semidis-lowmach_model}]
%Expanding $n_e^k$, $u_e^k$, $\phi^k$, and $L^k$ in powers of $M$, we write
%\begin{align*}
%        n_e^k = \sum_{i=0}^\infty M^i n_{e,i}^k,\quad u_e^k = \sum_{i=0}^\infty M^i u_{e,i}^k,\quad \phi^k = \sum_{i=0}^\infty M^i \phi_{i}^k,\quad L^k = \sum_{i=0}^\infty M^i L^k_i.
%\end{align*}
%From \eqref{L-def}, the zeroth- and first-order Boltzmann balances are
%\begin{align*}
%        &T_{e,0}\partial_xn_{e,0}^{k+1} - \eta n_{e,0}^k\partial_x\phi_0^{k+1} = 0,\\
%        &T_{e,0}\partial_x n_{e,1}^{k+1} - \eta(n_{e,1}^k\partial_x\phi_0^{k+1}+n_{e,0}\partial_x\phi_1^{k+1}) = 0.
%\end{align*}
%Substituting these balances into \eqref{L-def} gives
%\begin{align*}
%        \partial_xL^{k+1} = \eta\sum_{i=0}^2(n_{e,i}^k\partial_x\phi_{2-i}^{k+1})-T_{e,0}\partial_xn_{e,2}^{k+1} + T_{e,0}\partial_xn_e^{k+1} - T_{e,0}\partial_xn_e^{k}.
%\end{align*}
%Inserting this expression into the semi-discrete momentum equation \eqref{22} gives a semi-discrete form of the continuous low-Mach-number limiting model in Proposition~\ref{prop2.3}.
%\end{proof}

%\begin{corollary}
%The micro--macro system equipped with the AP scheme \eqref{21}--\eqref{AP-Poi} correctly degenerates to the semi-discrete low-Mach-number limiting model as $\Liu{\kappa=o(M^2)\to0}$.
%\end{corollary}
%---------------

\subsection{Fully discrete scheme}

Let $x_i=i\Delta x$, $i=0,\ldots,N_x-1$, and
$v_j=v_{\min}+j\Delta v$, $j=0,\ldots,N_v-1$, with
$v_{\min}=-v_{\max}$. The microscopic equation is discretized on the Cartesian phase-space grid, whereas a staggered grid is used for the macroscopic variables: $n_e$,
$\phi$, and $L$ are located at the cell centers $x_i$, while the electron
momentum $(n_eu_e)_{i+1/2}$ is defined at $x_{i+1/2}$. Velocity moments are approximated using the trapezoidal quadrature rule.

We introduce the discrete divergence, gradient, and Laplacian operators
\begin{equation*}
    (D_h\boldsymbol{q})_i
    =
    \frac{q_{i+1/2}-q_{i-1/2}}{\Delta x},
    \quad
    (G_h\boldsymbol{\psi})_{i+1/2}
    =
    \frac{\psi_{i+1}-\psi_i}{\Delta x},
    \quad
    \Delta_h=D_hG_h.
\end{equation*}
The variable-coefficient elliptic operator associated with the electric force and \Del{providing a centered approximation of $\partial_x(n_e^k\partial_x\,\cdot\,)$} is denoted by
\begin{equation*}
    \mathcal{A}_h^k\boldsymbol{\psi}
    =
    D_h\left(\boldsymbol{n}_{e,f}^kG_h\boldsymbol{\psi}\right),
    \quad
    n_{e,i+1/2}^k
    =
    \frac{n_{e,i}^k+n_{e,i+1}^k}{2}.
\end{equation*}
\subsubsection{Microscopic equation}

The phase-space transport terms in \eqref{17} are approximated by
first-order Lax--Friedrichs fluxes. Denoting the corresponding discrete
operators in the physical and velocity directions by
$\mathcal{D}_{x,h}^{\rm LF}$ and $\mathcal{D}_{v,h}^{\rm LF}$,
respectively, the microscopic update is written compactly as
\begin{equation}
\begin{aligned}
    \left(\frac{\kappa}{\Delta t}+\nu_{ee}\right)
    \boldsymbol{g}_e^{k+1}
    &=
    \frac{\kappa}{\Delta t}\boldsymbol{g}_e^k
    -\frac{\kappa}{M}
    \mathcal{D}_{v,h}^{\rm LF}
    \left(\boldsymbol{E}^k\boldsymbol{g}_e^k\right)
    \\
    &\quad
    -\frac{\kappa}{M}
    \left(\mathbb{I}-\Pi_{\mathcal{M},h}^k\right)
    \mathcal{D}_{x,h}^{\rm LF}
    \left[
        v\left(\boldsymbol{g}_e^k+
        \boldsymbol{\mathcal{M}}_e^k\right)
    \right],
\end{aligned}
\label{eq:fully-discrete-micro}
\end{equation}
where $\boldsymbol{E}^k$ denotes the discrete electric force and
$\Pi_{\mathcal{M},h}^k$ is the quadrature-based approximation of the
projection operator \Del{defined by equations \eqref{17}--\eqref{17:Bis}}.

\Del{The following proposition shows that the discrete microscopic component
remains asymptotically orthogonal to the collision invariants associated
with mass and momentum.}

\begin{proposition}[\Del{Asymptotic-Orthogonality of the discrete microscopic component}]
\label{prop_ge_fulldis}
\Del{Let $\boldsymbol{g}_e^{k+1}$ be defined by
\eqref{eq:fully-discrete-micro}. Assume that the initial microscopic
component satisfies
\begin{equation*}
    \left\langle g_{e,i,j}^{0}\right\rangle
    =
    \mathcal{O}(\Delta v+\varepsilon_V),
    \quad
    \left\langle v_jg_{e,i,j}^{0}\right\rangle
    =
    \mathcal{O}(\Delta v+\varepsilon_V),
\end{equation*}
where $\varepsilon_V$ denotes the truncation error associated with the
finite velocity domain. Under the consistency and stability assumptions
on the numerical fluxes, for every fixed time index $k$,}
\begin{equation}
    \max_i
    \left(
        \left|
            \left\langle g_{e,i,j}^{k}\right\rangle
        \right|
        +
        \left|
            \left\langle v_jg_{e,i,j}^{k}\right\rangle
        \right|
    \right)
    \leq
    C_k\left(\Delta x+\Delta v+\varepsilon_V\right),
    \label{eq:discrete-moment-estimate}
\end{equation}
where $C_k$ is independent of $\Delta x$, $\Delta v$, and
$\varepsilon_V$. Consequently,
\begin{equation*}
    \left\langle g_{e,i,j}^{k}\right\rangle\longrightarrow0,
    \quad
    \left\langle v_jg_{e,i,j}^{k}\right\rangle\longrightarrow0,
\end{equation*}
as $\Delta x,\Delta v\to0$ and the truncated velocity domain is enlarged.
\end{proposition}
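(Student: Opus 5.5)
We take the discrete moments $m_{0,i}^k=\langle g_{e,i,\cdot}^k\rangle_h$ and $m_{1,i}^k=\langle v_j g_{e,i,j}^k\rangle_h$ of \eqref{eq:fully-discrete-micro} under the trapezoidal quadrature, and derive a one-step recursion for them. The driving observation is that the exact projection satisfies $\langle(\mathbb{I}-\Pi_{\mathcal{M}})\varphi\rangle=\langle v(\mathbb{I}-\Pi_{\mathcal{M}})\varphi\rangle=0$, by \eqref{10} and the identities of the Lemma on $\Pi_{\mathcal{M}}$. The velocity transport term also has zero mass and a momentum equal to $\langle E g\rangle$ after integration by parts. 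At the discrete level these identities fail only up to quadrature and truncation defects.

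\textbf{Step 1 (projection defect).} Using the explicit form \eqref{17:Bis}, we have
\[
\langle(\mathbb{I}-\Pi^k_{\mathcal{M},h})f\rangle_h=\langle f\rangle_h\Bigl(1-\tfrac{\langle\mathcal{M}_e^k\rangle_h}{n_e^k}\Bigr)+\ldots ,
\]
and the remaining terms involve $\langle(v-Mu_e^k)\mathcal{M}_e^k\rangle_h$ and $\langle v(v-Mu_e^k)\mathcal{M}_e^k\rangle_h$. These are discrete Gaussian moments. Their deviations from $n_e^k$, from $0$, and from $n_e^kT_{e,0}$ are bounded by $C(\Delta v+\varepsilon_V)$, where $\varepsilon_V$ accounts for the Gaussian tail outside $[-V_{\max},V_{\max}]$; the trapezoidal rule on a smooth, rapidly decaying integrand is at least this accurate. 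Provided $n_e^k\ge n_\ast$ and $f=\mathcal{D}^{\rm LF}_{x,h}[v(g+\mathcal{M})]$ stays bounded, both discrete moments of the projected term are $\mathcal{O}(\Delta v+\varepsilon_V)$.

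\textbf{Step 2 (velocity flux).} The Lax--Friedrichs velocity operator is in conservative form. Summing over $j$ therefore telescopes to boundary fluxes at $\pm V_{\max}$, which are $\mathcal{O}(\varepsilon_V)$. Weighting by $v_j$ and applying summation by parts gives $-E_i^k m_{0,i}^k$, plus an $\mathcal{O}(\Delta v)$ numerical-diffusion contribution, plus boundary terms. Consistency of the fluxes enters here. Because the $x$-operator sits inside $(\mathbb{I}-\Pi)$, the $\Delta x$ error arises only through the boundedness of the discrete derivatives of $g$ and $\mathcal{M}$ combined with the consistency constant. To be safe, we also allow an $\mathcal{O}(\Delta x)$ term from the flux consistency assumption.

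\textbf{Step 3 (recursion and induction).} Taking moments of \eqref{eq:fully-discrete-micro} and dividing by $\kappa/\Delta t+\nu_{ee}$ gives
\[
|m^{k+1}_{\ell,i}|\le \frac{\kappa/\Delta t}{\kappa/\Delta t+\nu_{ee}}\Bigl(|m^k_{\ell,i}|+\tfrac{\Delta t}{M}|E^k_i|\,|m^k_{0,i}|\Bigr)+\frac{\kappa\,C}{M(\kappa/\Delta t+\nu_{ee})}(\Delta x+\Delta v+\varepsilon_V).
\]
The prefactor is at most $1$. Setting $a_k=\max_i(|m_0^k|+|m_1^k|)$, we obtain $a_{k+1}\le(1+c\Delta t/M)a_k+C'(\Delta x+\Delta v+\varepsilon_V)$. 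Discrete Gronwall from the assumed initial bound then yields \eqref{eq:discrete-moment-estimate}, with $C_k$ depending on $k,\Delta t,M,\kappa$ and on the stability bounds, but not on the mesh sizes. Letting $\Delta x,\Delta v\to0$ and $V_{\max}\to\infty$ gives the stated convergence.

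The main obstacle is the uniform control of $\boldsymbol{g}_e^k$, of its discrete derivatives, and of $E^k$. This control is needed so that the defects in Steps 1--2 carry mesh-independent constants. We would invoke it through the stated stability assumption on the fluxes, together with $n_e^k\ge n_\ast$, rather than prove it. A secondary subtlety is that the $\Delta x$ contribution must be tracked honestly: whether it actually appears depends on whether the discrete projection commutes with the discrete $x$-flux.
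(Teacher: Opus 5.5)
Your proposal is correct and follows essentially the same route as the paper. Both arguments take the discrete zeroth and first moments of \eqref{eq:fully-discrete-micro}, bound the moments of the projected $x$-transport term by quadrature and truncation defects, use summation by parts in $v$ to get $-E_i^k m_{0,i}^k$ plus $\mathcal{O}(\Delta v+\varepsilon_V)$ remainders, and close a one-step recursion with factor $\theta=\frac{\kappa/\Delta t}{\kappa/\Delta t+\nu_{ee}}$ by induction. The only difference is bookkeeping: the paper keeps the triangular structure, with $m_0$ contracting by $\theta<1$ on its own and $m_1$ driven by $C|m_0^k|$, whereas you lump both moments into one Gronwall inequality with growth factor $1+c\Delta t/M$, which is cruder but entirely sufficient for the estimate at fixed $k$.
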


\Del{This property is central to the consistency of the micro--macro discretization. By construction, the decomposition $f_e = \mathcal{M}_e + g_e$ assigns the mass and momentum of the distribution entirely to the macroscopic component, so that the microscopic part $g_e$ must carry no contribution to these two moments. The
proposition shows that the fully discrete scheme keeps this leakage at the order of discretization parameters and the phase-space truncation, so that the discrete macroscopic moments remain those of $\mathcal{M}_e$ up to $\mathcal{O}(\Delta x + \Delta v +\varepsilon_V)$.}

\begin{proof}
Set
\begin{equation*}
    m_{0,i}^{k}
    =
    \left\langle g_{e,i,j}^{k}\right\rangle,
    \quad
    m_{1,i}^{k}
    =
    \left\langle v_jg_{e,i,j}^{k}\right\rangle.
\end{equation*}
By the orthogonality of
$\mathbb{I}-\Pi_{\mathcal{M},h}^{k}$ and the consistency of the
quadrature-based projection, the zeroth and first moments of the projected
physical-space transport term in \eqref{eq:fully-discrete-micro} are
bounded by
\Del{$\mathcal{O}(\Delta x+\Delta v+\varepsilon_V)$}.
Moreover, discrete summation by parts in the velocity variable gives
\Del{\begin{equation*}
    \left\langle
        \mathcal{D}_{v,h}^{\rm LF}
        \left(\boldsymbol{E}^{k}\boldsymbol{g}_e^{k}\right)
    \right\rangle
    =
    \mathcal{O}(\Delta v+\varepsilon_V),\quad \left\langle
        v_j\mathcal{D}_{v,h}^{\rm LF}
        \left(\boldsymbol{E}^{k}\boldsymbol{g}_e^{k}\right)
    \right\rangle
    =
    -E_i^k m_{0,i}^{k}
    +
    \mathcal{O}(\Delta v+\varepsilon_V).
\end{equation*}}
%and
%\begin{equation*}
%    \left\langle
%        v_j\mathcal{D}_{v,h}^{\rm LF}
%        \left(\boldsymbol{E}^{k}\boldsymbol{g}_e^{k}\right)
%    \right\rangle
%    =
%    -E_i^k m_{0,i}^{k}
%    +
%    \mathcal{O}(\Delta v+\varepsilon_V).
%\end{equation*}
The remainder terms arise from the velocity quadrature and the truncation
of the velocity domain.

Taking successively the zeroth and first discrete velocity moments of
\eqref{eq:fully-discrete-micro} therefore yields
\Del{\begin{align*}
    \begin{aligned}
        |m_{0,i}^{k+1}|
    &\leq
    \theta |m_{0,i}^{k}|
    +
    C\left(\Delta x+\Delta v+\varepsilon_V\right),\\
    |m_{1,i}^{k+1}|
    &\leq
    \theta |m_{1,i}^{k}|
    +
    C|m_{0,i}^{k}|
    +
    C\left(\Delta x+\Delta v+\varepsilon_V\right),
    \end{aligned}
    & \quad \theta
    =
    \frac{\kappa/\Delta t}
    {\kappa/\Delta t+\nu_{ee}}
    <1.
\end{align*}}
%where
%\begin{equation*}
%    \theta
%    =
%    \frac{\kappa/\Delta t}
%    {\kappa/\Delta t+\nu_{ee}}
%    <1.
%\end{equation*}
The estimate \eqref{eq:discrete-moment-estimate} then follows by induction
on $k$ and from the assumed consistency of the initial projection.
\end{proof}

Since the phase-space transport terms are treated explicitly, the time
step is chosen to satisfy
\begin{equation}
    \frac{\kappa}{M(\kappa+\nu_{ee}\Delta t)}
    \frac{v_{\max}\Delta t}{\Delta x}
    \leq C_{\rm CFL},
    \quad
    \frac{\kappa}{M(\kappa+\nu_{ee}\Delta t)}
    \frac{\|\boldsymbol{E}^k\|_{\infty}\Delta t}{\Delta v}
    \leq C_{\rm CFL}.
    \label{eq:microscopic-CFL}
\end{equation}
The factor $\kappa/(\kappa+\nu_{ee}\Delta t)$ relaxes these restrictions
in the strongly collisional regime, consistently with the rapid damping
of the microscopic perturbation.

\subsubsection{Fully discrete AP scheme}

The explicit macroscopic convection term is discretized using a
first-order Lax--Friedrichs flux with local signal speed
$|u_e|+\sqrt{T_{e,0}}$. Let $\boldsymbol{q}^k$ denote the resulting
face-centered approximation of $\partial_x(n_eu_e^2)^k$, and define
\begin{equation*}
    \boldsymbol{\gamma}^k
    =
    \left(
        \left\langle v_j^2g_{e,i,j}^k\right\rangle
    \right)_i.
\end{equation*}
The known right-hand-side vectors are collected as
\Del{\begin{equation}
\begin{aligned}
    \boldsymbol{b}_1^k
    =
    -\frac{\boldsymbol{n}_e^k}{\Delta t^2}
    +\frac{1}{\Delta t}
    D_h\boldsymbol{(n_eu_e)}^k
    -D_h\boldsymbol{q}^k,\quad
    \boldsymbol{b}_2^k
    =
    -M^2T_{e,0}\Delta_h\boldsymbol{n}_e^k
    -\Delta_h\boldsymbol{\gamma}^{k+1}.
\end{aligned}
\label{eq:fully-discrete-rhs}
\end{equation}}
Using centered differences for the implicit pressure, electric-force, and
Poisson terms, the fully discrete AP formulation takes the \Del{matrix} block form
\begin{equation}
    \begin{bmatrix}
        -\Delta_h
        &
        -\Delta t^{-2}I
        &
        0
        \\[0.2em]
        M^2\Delta_h
        &
        (1-M^2)T_{e,0}\Delta_h
        &
        -\eta\mathcal{A}_h^k
        \\[0.2em]
        0
        &
        I
        &
        -\lambda^2\eta\Delta_h
    \end{bmatrix}
    \begin{bmatrix}
        \boldsymbol{L}^{k+1}\\
        \boldsymbol{n}_e^{k+1}\\
        \boldsymbol{\phi}^{k+1}
    \end{bmatrix}
    =
    \begin{bmatrix}
        \boldsymbol{b}_1^k\\
        \boldsymbol{b}_2^k\\
        \boldsymbol{n}_i
    \end{bmatrix}.
    \label{eq:fully-discrete-block-system}
\end{equation}
Once this system has been solved, the electron momentum is updated through
\begin{equation}
    \boldsymbol{(n_eu_e)}^{k+1}
    =
    \boldsymbol{(n_eu_e)}^k
    -\Delta t\,\boldsymbol{q}^k
    +\Delta t\,G_h\boldsymbol{L}^{k+1}.
    \label{eq:fully-discrete-momentum}
\end{equation}

Thus, only $\boldsymbol{L}^{k+1}$, $\boldsymbol{n}_e^{k+1}$, and
$\boldsymbol{\phi}^{k+1}$ enter the principal coupled solve. Under the
scaling $\kappa=o(M^2)$, the second block row of
\eqref{eq:fully-discrete-block-system} recovers the discrete
low-Mach-number balance as $M\to0$, while
\eqref{eq:fully-discrete-momentum} continues to determine the electron
momentum. The fully discrete formulation therefore preserves the
asymptotic structure established for the time-discrete scheme.
Consequently, the fully discrete AP scheme is defined by the microscopic
update \eqref{eq:fully-discrete-micro}, the augmented block system
\eqref{eq:fully-discrete-block-system}, and the momentum update
\eqref{eq:fully-discrete-momentum}.

\subsubsection{Fully discrete IMEX scheme}

For comparison, the fully discrete IMEX scheme is defined by the
microscopic update \eqref{eq:fully-discrete-micro}, followed by the
density--potential system
\begin{equation}
    \begin{bmatrix}
        \displaystyle
        \frac{M^2}{\Delta t^2}I
        -(1-M^2)T_{e,0}\Delta_h
        &
        \eta\mathcal{A}_h^k
        \\[0.4em]
        I
        &
        -\lambda^2\eta\Delta_h
    \end{bmatrix}
    \begin{bmatrix}
        \boldsymbol{n}_e^{k+1}\\
        \boldsymbol{\phi}^{k+1}
    \end{bmatrix}
    =
    \begin{bmatrix}
        -M^2\boldsymbol{b}_1^k-\boldsymbol{b}_2^k\\
        \boldsymbol{n}_i
    \end{bmatrix}.
    \label{eq:fully-discrete-IMEX-density-potential}
\end{equation}
Once $\boldsymbol{n}_e^{k+1}$ and
$\boldsymbol{\phi}^{k+1}$ have been computed, the electron momentum is
updated separately through
\begin{equation}
\begin{aligned}
    \boldsymbol{(n_eu_e)}^{k+1}
    &=
    \boldsymbol{(n_eu_e)}^k
    -\Delta t\,\boldsymbol{q}^k
    -\Delta t\,T_{e,0}G_h\boldsymbol{n}_e^k
    \\
    &\quad
    -\frac{\Delta t}{M^2}
    \left[
        (1-M^2)T_{e,0}G_h\boldsymbol{n}_e^{k+1}
        -\eta\boldsymbol{n}_{e,f}^k
        G_h\boldsymbol{\phi}^{k+1}
        +G_h\boldsymbol{\gamma}^{k+1}
    \right].
\end{aligned}
\label{eq:fully-discrete-IMEX-momentum}
\end{equation}

\subsubsection{Post-processing strategy for the AP scheme}

The AP-post scheme first computes
$\boldsymbol{n}_e^{k+1}$ and $\boldsymbol{\phi}^{k+1}$
from the same density--potential system
\eqref{eq:fully-discrete-IMEX-density-potential} as the fully discrete
IMEX scheme. It then follows the AP formulation:
$\boldsymbol{L}^{k+1}$ is recovered from the first block row of
\eqref{eq:fully-discrete-block-system}, and the electron momentum
$\boldsymbol{(n_eu_e)}^{k+1}$ is updated through
\eqref{eq:fully-discrete-momentum}.

Since \eqref{eq:fully-discrete-IMEX-density-potential} is obtained by exact block
elimination, the AP-post formulation is algebraically equivalent to the
original fully discrete AP scheme and preserves the same asymptotic
properties. Its main advantage is that the principal coupled problem is
reduced from a $3\times3$ to a $2\times2$ block system, resulting in a
simpler algebraic structure for iterative solution and preconditioning. Related block preconditioning techniques have proved effective for anisotropic elliptic systems arising from AP discretizations \cite{li2024block}, and the reduced formulation provides a suitable structure for developing analogous solvers.

%%-------------------------------------------------------------------------
\section{Numerical experiments}
\subsection{Linear Landau damping}
We first consider the dimensionless Vlasov--BGK--Poisson system on the periodic spatial domain $[0,2\pi/k]$, with initial electron distribution
\begin{equation*}
    f_{e,0}(x,v)=\frac{1}{\sqrt{2\pi}}\exp\left(-\frac{v^2}{2}\right)
    \left(1+\alpha\cos(kx)\right),
    \quad (x,v)\in [0,2\pi/k]\times\mathbb{R},
\end{equation*}
where $k=0.5$ and $\alpha=0.05$. The velocity domain is truncated to $[v_{\min},v_{\max}]=[-6,6]$. A uniform Cartesian grid with $N_x=N_v=512$ is used in phase space. For stability, the time step is chosen as $\Delta t=4\times10^{-4}$. Periodic boundary conditions are imposed in space, and homogeneous Neumann boundary conditions are imposed in velocity.

We compare three discretizations. The first one, denoted by V--P, is a direct first-order upwind explicit discretization of the dimensionless Vlasov--BGK--Poisson system. The second one, denoted by IMEX, is the implicit--explicit micro--macro discretization. The third one, denoted by AP, is the \Fabrice{augmented micro--macro denoted AP.}  Unless otherwise specified, all dimensionless parameters $M$, $\lambda$, $\eta$, and $\kappa$ are set to one. For the V--P model, the initial condition is $f_{e,0}$. For the IMEX and AP models, we take
\begin{equation*}
    \mathcal{M}_{e}(x,v,0)=f_{e,0}(x,v),\quad g_e(x,v,0)=0,
    \quad n_e(x,0)=1+\alpha\cos(kx),\quad u_e(x,0)=0 .
\end{equation*}

Figure~\ref{fig:landau-energy} shows the time evolution of the electric-field norm $\|E(t)\|_{L^2}$ on a logarithmic scale. All three methods reproduce \Fabrice{the damping decay with indistinguishable numerical approximations.}
% \st{The IMEX and AP results are nearly indistinguishable over the entire time interval, which is consistent with the theoretical expectation that the AP scheme reduces to the corresponding IMEX discretization away from the low-Mach-number singular limit. The V--P result follows the same damping trend, with only small discrepancies at later times.}}

%\begin{figure}[H]
%    \centering
%    \includegraphics[width=0.5\textwidth]{landau_electric_energy_legend_lower_left.pdf}
%    \caption{Time evolution of the electric-field norm $\|E(t)\|_{L^2}$ in the Landau damping test.}
%    \label{fig:landau-energy}
%\end{figure}

\Fabrice{Figure~\ref{fig:landau-ne-ue-phi} further displays the spatial profiles of the electron density and mean velocity $n_e,u_e$, and the electric potential $\phi$ at time $t=2 $ and $t=10$.}  At $t=2$, the three methods produce nearly identical spatial distributions. \Fabrice{At $t=10$, the IMEX and AP methods show slight phase and amplitude discrepancies relative to the V--P solution, though the overall error remains small. In this fully resolved regime, the IMEX and AP micro--macro schemes the explicit V--P discretization. Here the discretization resolves every scale of the problem (Debye length, thermal velocity). In this fully resolved setting, both the IMEX and AP micro--macro schemes yield results in close agreement with  the explicit V--P discretization closely.}

\begin{figure}[htbp]
    \centering
  \includegraphics[width=0.43\textwidth]{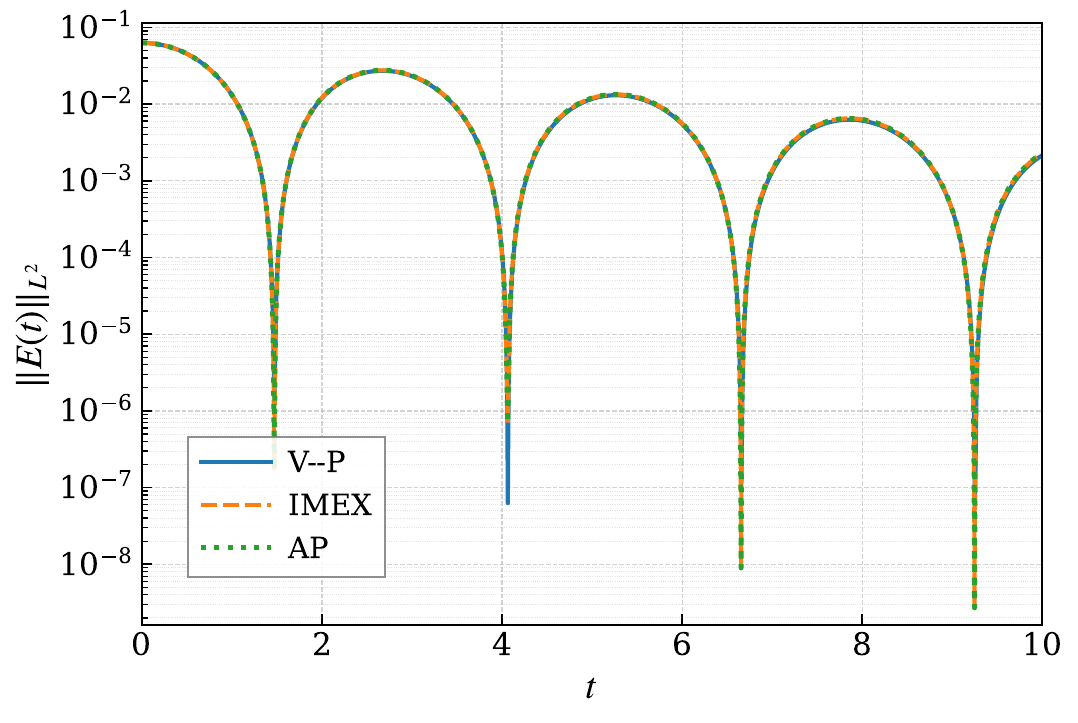}
    \caption{Time evolution of the electric-field norm $\|E(t)\|_{L^2}$ in the Landau damping test.}
    \label{fig:landau-energy}

    \begin{subfigure}[t]{0.33\textwidth}
        \centering
        \includegraphics[width=\textwidth]{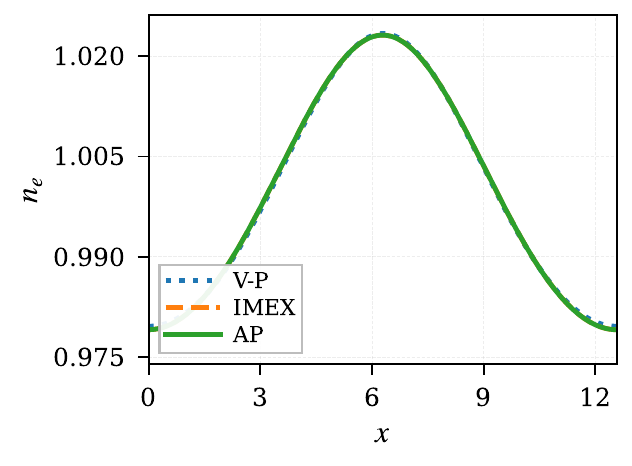}
        \caption{$n_e$ at $t=2$}
        \label{fig:landau-ne-t2}
    \end{subfigure}%
    \begin{subfigure}[t]{0.33\textwidth}
        \centering
        \includegraphics[width=\textwidth]{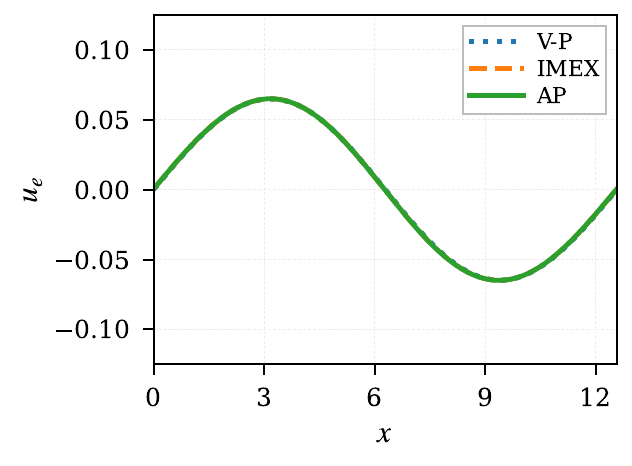}
        \caption{$u_e$ at $t=2$}
        \label{fig:landau-ue-t2}
    \end{subfigure}%
        \begin{subfigure}[t]{0.33\textwidth}
        \centering
        \includegraphics[width=\textwidth]{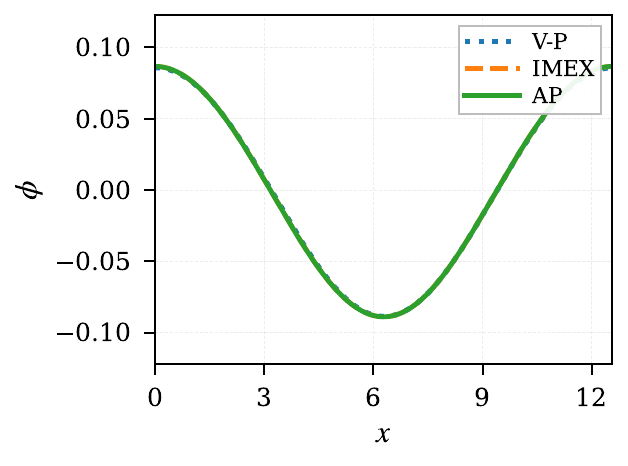}
        \caption{$\phi$ at $t=2$}
        \label{fig:landau-phi-t2}
    \end{subfigure}

    \begin{subfigure}[t]{0.33\textwidth}
        \centering
        \includegraphics[width=\textwidth]{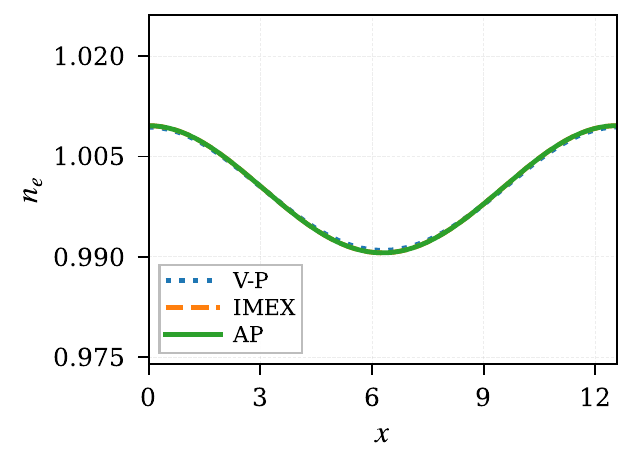}
        \caption{$n_e$ at $t=10$}
        \label{fig:landau-ne-t10}
    \end{subfigure}%
    \begin{subfigure}[t]{0.33\textwidth}
        \centering
        \includegraphics[width=\textwidth]{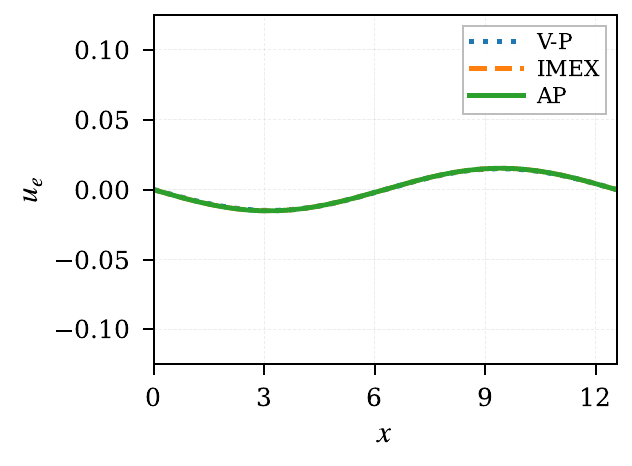}
        \caption{$u_e$ at $t=10$}
        \label{fig:landau-ue-t10}
    \end{subfigure}%
    \begin{subfigure}[t]{0.33\textwidth}
        \centering
        \includegraphics[width=\textwidth]{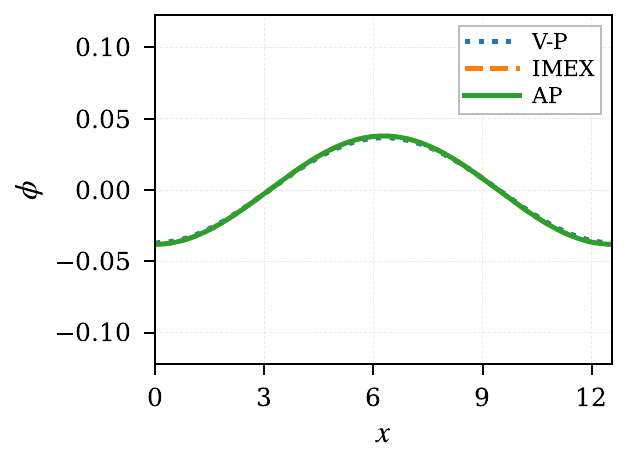}
        \caption{$\phi$ at $t=10$}
        \label{fig:landau-phi-t10}
    \end{subfigure}

    \caption{Spatial profiles of the electron density $n_e$, electron velocity $u_e$, and electric potential $\phi$ in the linear Landau damping test. From top to bottom, the rows correspond to $t=2$ and $t=10$, respectively. From left to right, the columns correspond to $n_e$, $u_e$, and $\phi$, respectively. The V--P, IMEX, and AP results are compared in each subfigure.}
    \label{fig:landau-ne-ue-phi}
\end{figure}
\subsection{Low-Mach-number limit}
\Fabrice{
The low-Mach-number limit places a specific numerical difficulty related to the degeneracy of the continuous model
analyzed in Section~2.4.2. At the discrete level, the stiff force balance in the macroscopic momentum equation carries a factor of order $1/M^2$, so that a linear-solver residual meeting a prescribed (iterative solver) tolerance can still be amplified in the velocity update and accumulate over time. This mechanism is easily overlooked and tends to remain hidden in the literature. It is indeed tied to iterative solvers with a finite stopping tolerance: low-Mach and AP schemes are frequently assessed on one-dimensional problems small enough to be handled by direct methods, whose residuals lie at round-off and therefore never trigger the $1/M^2$ amplification. To isolate it, we deliberately avoid nontrivial physical dynamics and use an exact, time-independent equilibrium, whose continuous solution is known and stationary. Any departure of the computed solution is then attributable solely to the scheme, so that the velocity error $E_u(t)=\left\|u_e(\cdot,t)-1\right\|_{\infty}$ directly measures its ability to preserve the low-Mach-number balance as $M \to 0$. We compare the standard semi-implicit IMEX micro--macro scheme, which serves as the baseline and exhibits the pathology, the proposed AP scheme and AP-post formulation which reconstruct the stiff balance through the auxiliary variable $L$.}

The dimensionless parameters \Fabrice{are $ \lambda=10^{-2}$,  $\eta=1$ with $\kappa=M^2$ and the Mach number is to $M=10^{-4}$ and decreasing to  $10^{-7}$.} The velocity domain is $[-v_{\max},v_{\max}]$ with $v_{\max}=7$. The numbers of grid points in the spatial and velocity directions are $N_x=201$ and $N_v=401$, respectively, and the time step is $\Delta t=10^{-3}$. Except for the Mach number, all numerical parameters are kept fixed in all tests.

\Fabrice{The steady state solution is defined by 
\begin{equation*}
    n_e(x,t)=1,
    \quad u_e(x,t)=1,
    \quad g_e(x,v,t)=0,
    \quad \phi(x,t)=0,
    \quad L(x,t)=0.
\end{equation*}
The exact solution serves as an initial data and left boundary condition for the macroscopic quantities $(n_e,u_e,\phi,L)=(1,1,0,0)$}. 
At the right boundary, homogeneous Neumann conditions are imposed for these macroscopic variables. Homogeneous Neumann conditions are imposed for the microscopic perturbation $g_e$ at both ends of the spatial domain. Since the initial and boundary conditions are consistent with the exact equilibrium, \Fabrice{the numerical solution is expected to} preserve the steady state throughout the computation.

\Fabrice{All linear systems arising at each time step are solved by a preconditioned Krylov method, namely GMRES with an incomplete-LU (ILU) preconditioner, using a drop tolerance $\tau = 10^{-2}$. This iterative choice is deliberate: unlike a direct solver, GMRES returns an approximate solution whose residual is controlled only up to a prescribed stopping tolerance, which is precisely the finite residual whose amplification we wish to probe. To this end, we let the tolerances follow the Mach number, $\varepsilon_{\rm rel} = M,
    \
    \varepsilon_{\rm abs} = M^2, $
% \begin{equation*}
%     \varepsilon_{\rm rel} = M,
%     \quad
%     \varepsilon_{\rm abs} = M^2 ,
% \end{equation*}
so that the solver is required to work increasingly hard as $M \to 0$. Even so, the standard IMEX scheme retains terms of order $1/M^2$ in the low-Mach-number stiff part, so that a residual meeting these tolerances is still amplified by a factor $\sim 1/M$ in the velocity update, an error that a direct solver, returning residuals at round-off, would never reveal for small size one-dimensional problems.}

Figure~\ref{fig:low-mach-mach-sweep} shows the time evolution of the \Fabrice{equilibrium preservation} error $E_u(t)$ for the three schemes and different Mach numbers. The standard IMEX scheme exhibits clear error accumulation in all tests. For $M=10^{-4}$, the IMEX velocity errors at $t=1$ is \Fabrice{larger than $50\%$. For $M=10^{-7}$, the error reaches $2.45\times10^2$, i.e. the computed parallel
velocity departs from its exact value $u_e=1$ by more than two hundred times.} In contrast, the AP scheme keeps the velocity error very small for all Mach numbers; even for $M=10^{-7}$, the error at $t=1$ is only $2.14\times10^{-11}$. The AP-post results further show that the post-processing formulation preserves AP stability and improves the algebraic solver behavior \Fabrice{by reducing the size of the linear system}; its error is generally smaller than that of the original AP scheme.

\begin{figure}[htbp]
    \centering
    \includegraphics[width=0.95\textwidth]{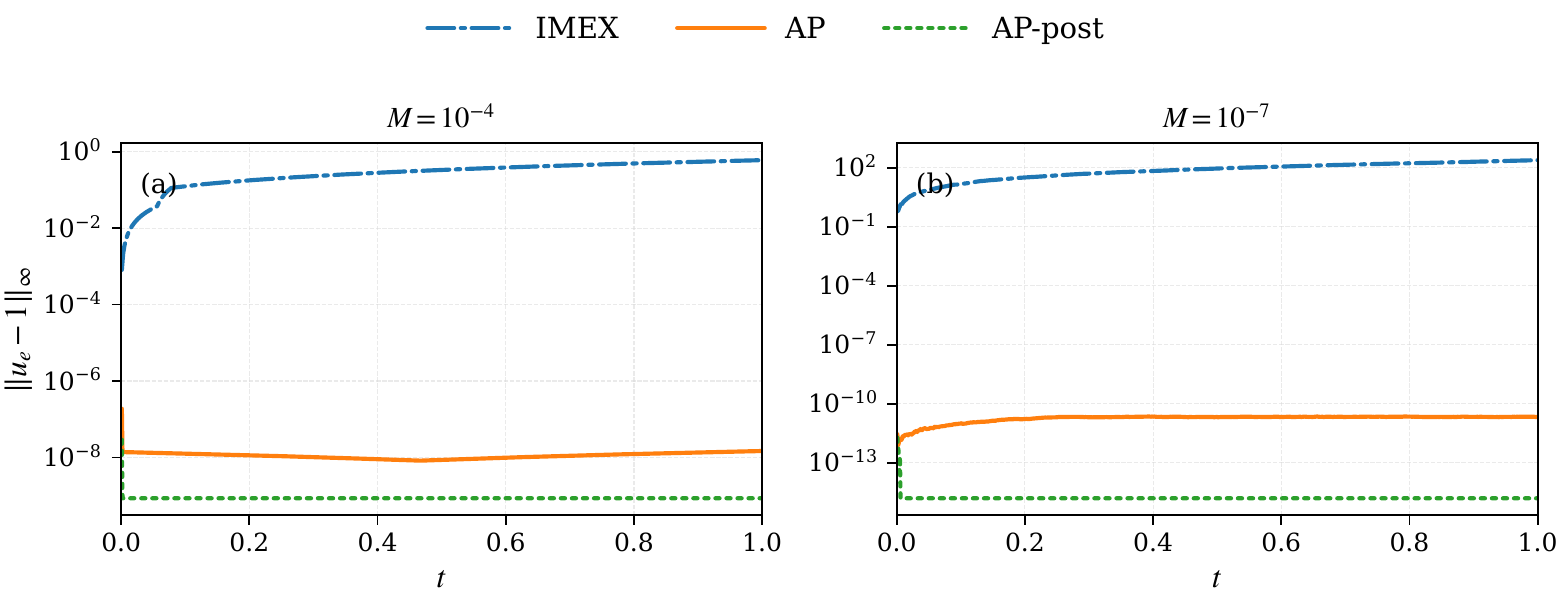}
    \caption{Low-Mach-number equilibrium test: time evolution of the velocity error $E_u(t)=\left\|u_e(\cdot,t)-1\right\|_{\infty}$ computed by the IMEX, AP, and AP-post schemes for different Mach numbers.}
    \label{fig:low-mach-mach-sweep}
\end{figure}

Table~\ref{tab:low-mach-mach-sweep} reports the velocity error at $t=1$ and the maximum error over the whole computational interval. \Fabrice{To quantify the difference among the schemes, we define the error--reduction factor}
\begin{equation*}
    R_{\rm AP}(t)=
    \frac{\left\|u_{e,\rm IMEX}(\cdot,t)-1\right\|_{\infty}}
    {\left\|u_{e,\rm AP}(\cdot,t)-1\right\|_{\infty}},
    \quad
    R_{\rm AP-post}(t)=
    \frac{\left\|u_{e,\rm IMEX}(\cdot,t)-1\right\|_{\infty}}
    {\left\|u_{e,\rm AP-post}(\cdot,t)-1\right\|_{\infty}} .
\end{equation*}

\begin{table}[htbp]
    \centering
    \caption{Low-Mach-number equilibrium test with Mach-dependent GMRES tolerances. Here \Fabrice{$E^1=E_u(t=1)$} denotes the velocity error at $t=1$, and \Fabrice{$E^{\max}=\max_{t\leq 1}E_u(t)$} denotes the maximum error over $0<t\le1$.}
    \label{tab:low-mach-mach-sweep}
    \resizebox{\textwidth}{!}{%
    \begin{tabular}{ccccccccc}
        \toprule
        $M$ & $\varepsilon_{\rm rel}$ & $\varepsilon_{\rm abs}$ & $E^1_{\rm IMEX}$ & $E^1_{\rm AP}$ & $E^1_{\rm AP-post}$ & $E^{\max}_{\rm IMEX}$ & $E^{\max}_{\rm AP}$ & $E^{\max}_{\rm AP-post}$ \\
        \midrule
        $1\times10^{-4}$ & $1\times10^{-4}$ & $1\times10^{-8}$ & $6.02\times10^{-1}$ & $1.47\times10^{-8}$ & $8.55\times10^{-10}$ & $6.02\times10^{-1}$ & $1.88\times10^{-7}$ & $2.92\times10^{-8}$ \\
        $1\times10^{-5}$ & $1\times10^{-5}$ & $1\times10^{-10}$ & $5.43\times10^{-1}$ & $9.26\times10^{-8}$ & $7.93\times10^{-12}$ & $5.43\times10^{-1}$ & $1.35\times10^{-6}$ & $2.91\times10^{-10}$ \\
        $1\times10^{-6}$ & $1\times10^{-6}$ & $1\times10^{-12}$ & $4.67\times10^{-2}$ & $1.13\times10^{-11}$ & $1.81\times10^{-12}$ & $5.36\times10^{-2}$ & $2.17\times10^{-11}$ & $3.63\times10^{-12}$ \\
        $1\times10^{-7}$ & $1\times10^{-7}$ & $1\times10^{-14}$ & $2.45\times10^{2}$ & $2.14\times10^{-11}$ & $1.55\times10^{-15}$ & $2.45\times10^{2}$ & $2.26\times10^{-11}$ & $1.89\times10^{-12}$ \\
        \bottomrule
    \end{tabular}%
    }
\end{table}

\Fabrice{The IMEX scheme fails to improve accuracy, and in fact it deteriorates, as $M \to 0$, despite the tightening of the GMRES tolerance ($\varepsilon_{\rm rel}=M$, $\varepsilon_{\rm abs}=M^2$), revealing a nontrivial coupling between linear-solver errors and low-Mach-number stiffness in the standard discretization. Both AP and AP-post schemes, by contrast, preserve the equilibrium for all Mach numbers tested: at $M=10^{-7}$ and $t=1$, the IMEX error is $2.45\times10^2$, against $2.14\times10^{-11}$ and $1.55\times10^{-15}$
for AP and AP-post. Table~\ref{tab:low-mach-mach-sweep} quantifies this gap
through the error-reduction factors, which at $t=1$ range from $5.86\times10^6$
to $1.14\times10^{13}$ for AP, and reach $1.58\times10^{17}$ for AP-post. The AP
scheme removes the amplification by reconstructing the low-Mach-number balance
at the discrete level, and the AP-post formulation further improves the
algebraic solve while preserving the same asymptotic structure.}

\subsection{Plasma expansion}
We finally consider a one-dimensional plasma expansion into vacuum. Plasma expansion is a classical benchmark for testing the multiscale capability of plasma solvers. The evolution typically involves rapid electron diffusion, local charge separation, formation of a self-consistent sheath electric field, and ion acceleration driven by that field \cite{mora2003plasma}. At the early stage of expansion, the electron thermal velocity is much larger than the ion thermal velocity, so electrons diffuse into the vacuum first and induce local charge separation near the plasma front. The resulting self-consistent electric field accelerates ions and drives the overall plasma expansion. This test therefore contains a quasi-neutral bulk region, a locally non-neutral transition region, fast electron response, and slow ion-scale dynamics.

% Previous studies have shown that electron kinetic effects may influence the expansion front, the electric-field structure, and the ion acceleration process, and therefore a simple isothermal electron-fluid closure may not fully describe the associated thermal and non-equilibrium effects \cite{grismayer2008electron}. In this test, we use a Vlasov description for ions and a micro--macro description for electrons. The electron temperature is recovered from the second velocity moment of the microscopic perturbation, allowing us to assess the ability of the proposed method to describe electron non-equilibrium thermal effects. The numerical results are compared with a PIC reference solution. The quantities compared include the ion density $n_i$, electron density $n_e$, ion macroscopic velocity $u_i$, electric field $E$, and electron temperature $T_e$. Here the electron temperature is not prescribed by an isothermal closure. Instead, with $T_{e,0}$ denoting the background equilibrium temperature, it is computed as
% \[
%     T_e(x,t)
%     =
%     T_{e,0}
%     +
%     \frac{1}{n_e(x,t)}
%     \int_{\mathbb{R}} v^2 g_e(x,v,t)\,dv .
% \]
% This expression shows that spatial variations of the electron temperature are generated by the second velocity moment of the microscopic perturbation $g_e$. Thus the method describes not only the macroscopic electron-density response but also the thermal-state variation caused by deviations from equilibrium.
Previous studies have shown that electron kinetic effects may influence the expansion front, the electric-field structure, and the ion acceleration process, and therefore a simple isothermal electron-fluid closure may not fully describe the associated thermal and non-equilibrium effects \cite{grismayer2008electron}. In this test, we use a Vlasov description for ions and a micro--macro description for electrons. The electron temperature is recovered from the second velocity moment of the microscopic perturbation \Fabrice{using $T_e(x,t)
    =
    T_{e,0}
    +
    \frac{1}{n_e(x,t)}
    \int_{\mathbb{R}} v^2 g_e(x,v,t)\,dv$,} allowing us to assess the ability of the proposed method to describe electron non-equilibrium thermal effects. The numerical results are compared with a PIC reference solution.

The test is nondimensionalized using electron units. The characteristic length, time, velocity, temperature, potential, magnetic field, and number-density scales are chosen as $\bar{x}=10^{-3}$, $\bar{t}=10^{-3}$, $\bar{u}=1$, $\bar{T}=1$, $\bar{\phi}=1$, $\bar{B}=1$, and $\bar{n}=10^6$, respectively. In these units, the electron mass, elementary charge, vacuum permittivity, vacuum permeability, and Boltzmann constant are normalized as $m_e=1$, $e=1$, $\varepsilon_0=1$, $\mu_0=1$, and $k_B=1$. The ion mass is $m_i=1836$, corresponding to a typical proton--electron mass ratio. The electron background temperature and ion temperature are $T_{e,0}=1$ and $T_i=10^{-3}$. Under these parameters, the electron dynamics and ion response scales are clearly separated, and the dimensionless ion sound speed is $c_s=\sqrt{k_BT_{e,0}/m_i}\approx 2.33\times10^{-2}$.

The computational domain is $x\in[0,200]$ with spatial mesh size $\Delta x=0.5$. The left endpoint $x=0$ corresponds to the center of the original symmetric problem, while the right endpoint represents the far vacuum boundary. The electron velocity domain is $v_e\in[-7,7]$ with 401 grid points, giving $\Delta v_e=3.5\times10^{-2}$. The ion velocity domain is $v_i\in[-1,7]$, also with 401 grid points, giving $\Delta v_i=2.0\times10^{-2}$. Each dimensionless time unit is advanced with 200 time steps, so $\Delta t=5.0\times10^{-3}$. Diagnostic outputs are recorded every one dimensionless time unit. Unless otherwise stated, all numerical results are reported in these electron units.

The boundary conditions are consistent with plasma expansion into vacuum. At $x=0$, symmetry is imposed by using reflective boundary conditions for the distribution functions and setting $E(0,t)=0$. At the right boundary, absorbing boundary conditions are imposed for the distribution functions to model particles leaving the computational domain and entering the far-field vacuum without returning. An open boundary treatment is used for the electric field to reduce artificial boundary effects on the interior expansion.

Figure~\ref{fig:plasma-expansion-results} presents the ion density $n_i$, electron density $n_e$, ion macroscopic velocity $u_i$, electric field $E$, and electron temperature $T_e$ at $t=9.8\tau_{pi}$. The density profiles are shown on logarithmic scales to resolve both the plasma front and the low-density tail. For the temperature comparison, the AP solution is assessed against the PIC reference and Vlasov--Poisson solutions obtained with first-, third-, and fifth-order conservative semi-Lagrangian (CSL) methods\cite{yang2021highly}.

\begin{figure}[htbp]
    \centering
    \begin{subfigure}[t]{0.48\textwidth}
        \centering
        \includegraphics[width=\textwidth]{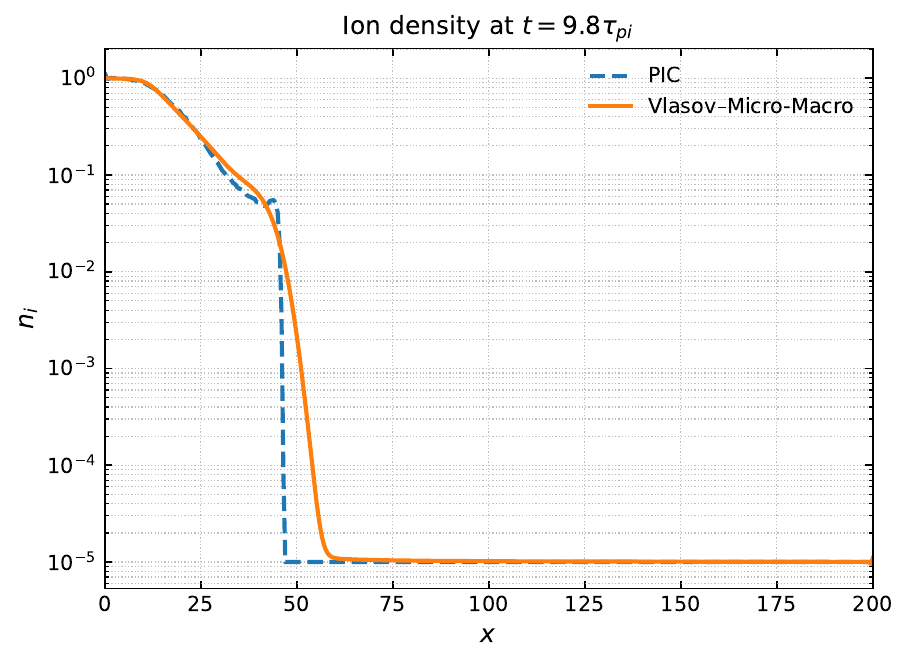}
        \caption{Ion density $n_i$}
        \label{fig:plasma-expansion-ni}
    \end{subfigure}
    \hfill
    \begin{subfigure}[t]{0.48\textwidth}
        \centering
        \includegraphics[width=\textwidth]{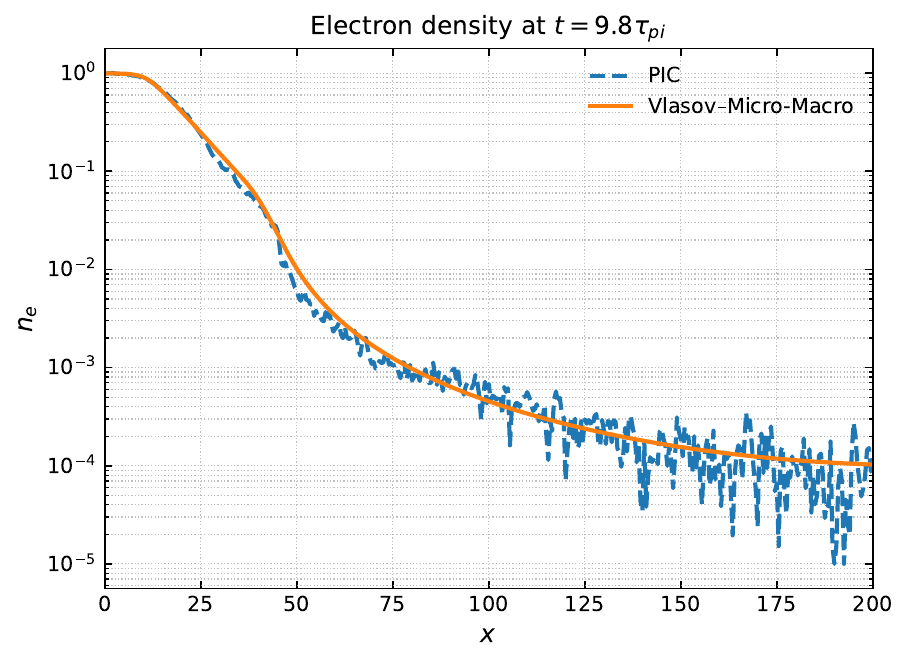}
        \caption{Electron density $n_e$}
        \label{fig:plasma-expansion-ne}
    \end{subfigure}

    \vspace{-0.1em}

    \begin{subfigure}[t]{0.48\textwidth}
        \centering
        \includegraphics[width=\textwidth]{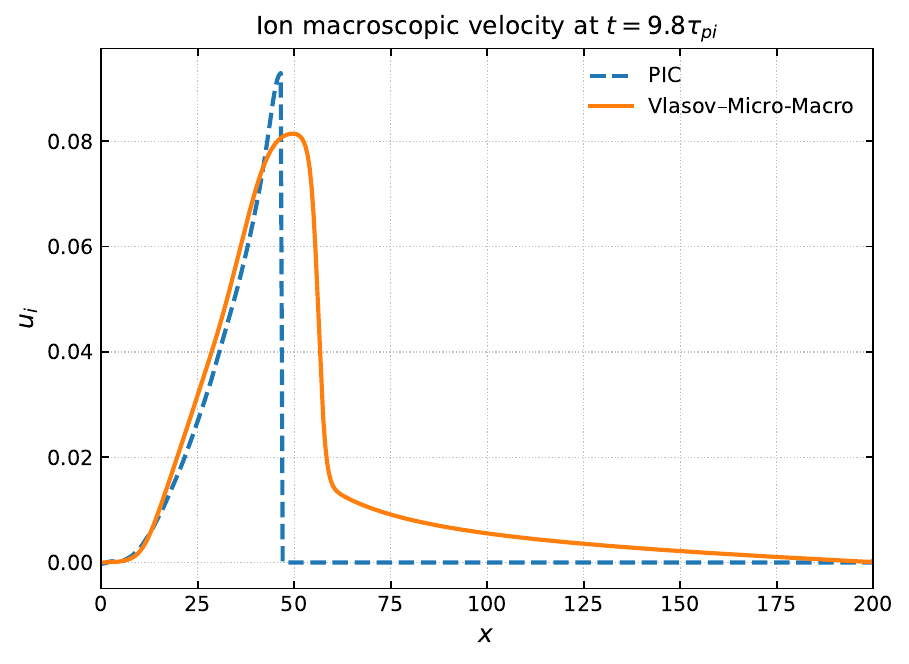}
        \caption{Ion macroscopic velocity $u_i$}
        \label{fig:plasma-expansion-ui}
    \end{subfigure}
    \hfill
    \begin{subfigure}[t]{0.48\textwidth}
        \centering
        \includegraphics[width=\textwidth]{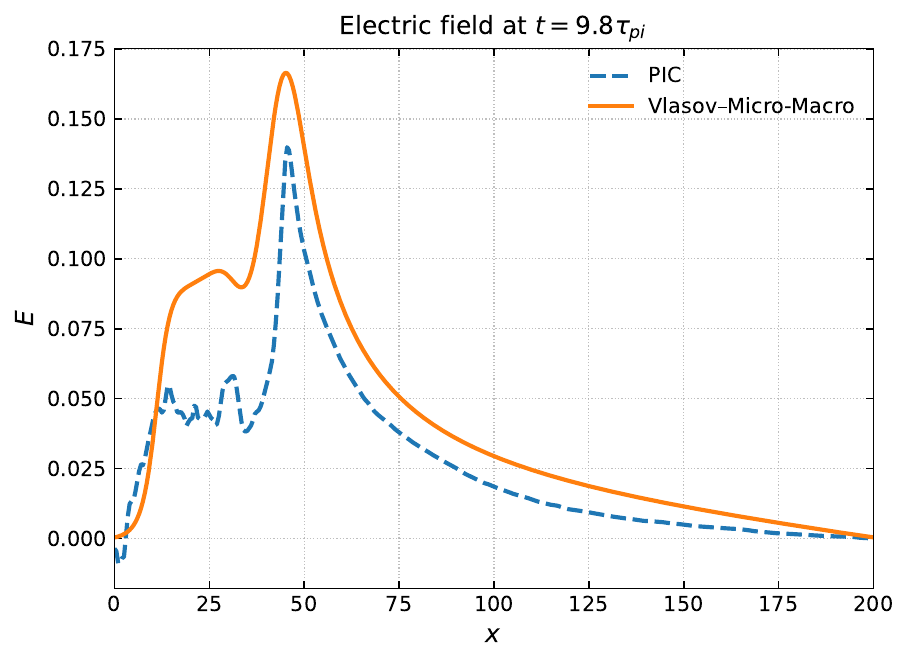}
        \caption{Electric field $E$}
        \label{fig:plasma-expansion-electric-field}
    \end{subfigure}

    \vspace{-0.1em}

    % --- Dernière ligne : sous-figure (e) à gauche, caption à droite ---
    \begin{minipage}[c]{0.48\textwidth}
        \centering
        \begin{subfigure}[t]{\textwidth}
            \centering
            \includegraphics[width=\textwidth]{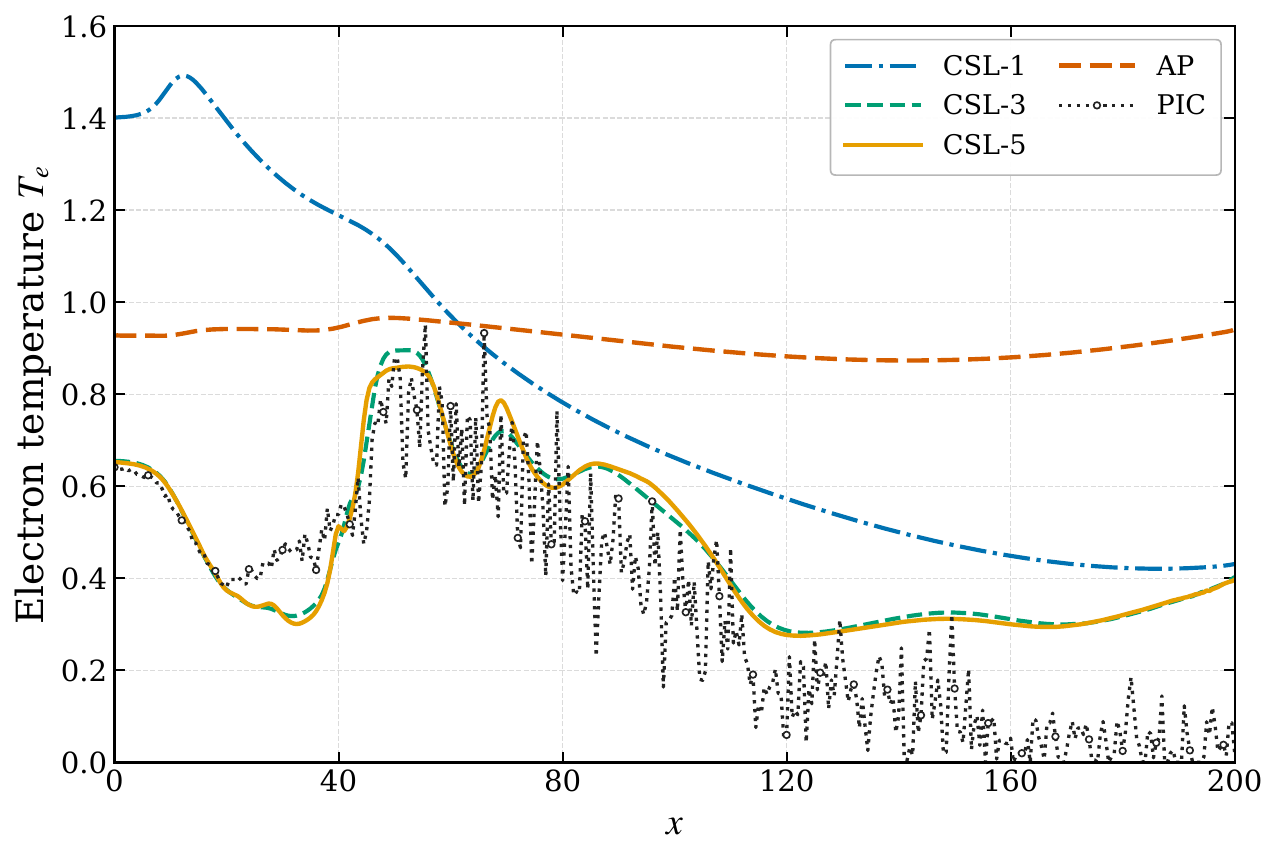}
            \caption{Electron temperature $T_e$}
            \label{fig:plasma-expansion-Te-comparison}
        \end{subfigure}
    \end{minipage}
    \hfill
    \begin{minipage}[c]{0.48\textwidth}
        \caption{\Del{Numerical results for the one-dimensional plasma expansion
        problem at $t=9.8\tau_{pi}$: (a) ion density, (b) electron density,
        (c) ion macroscopic velocity, (d) electric field, and (e) electron
        temperature. The present AP results are compared with the PIC reference
        solution in all panels; panel (e) additionally includes Vlasov--Poisson
        solutions obtained with first-, third-, and fifth-order CSL
        methods. The vertical axes in panels (a) and (b) are logarithmic.}}
        \label{fig:plasma-expansion-results}
    \end{minipage}
\end{figure}

Figures~\ref{fig:plasma-expansion-results}(a)--(d) show that the AP and PIC solutions capture the expansion front near $x\approx50$, the electron low-density tail, the acceleration of ions, and the localized electric field generated in the plasma--vacuum transition region. The AP solution agrees well with PIC in the plasma bulk and predicts the front position correctly. Its density transition and electric-field profile are somewhat broader, while the ion-velocity peak is slightly lower and shifted downstream. These discrepancies are consistent with the numerical diffusion of the first-order spatial discretization.

The temperature comparison in Figure~\ref{fig:plasma-expansion-results}(e) demonstrates a stronger sensitivity to phase-space accuracy. The CSL-1 solution overestimates the bulk temperature and smooths the non-monotone structures near and beyond the expansion front, whereas the CSL-3 and CSL-5 profiles are nearly indistinguishable and resolve the principal cooling and reheating features. The PIC result follows the same large-scale trend but becomes noisy in the low-density region, where the normalized second moment is sensitive to particle statistics. The AP solution remains smooth and avoids the large CSL-1 overestimation, but its weak temperature variation does not reproduce the detailed structures resolved by the higher-order CSL methods.

Overall, the proposed method stably captures the principal macroscopic dynamics of plasma expansion, while the temperature results indicate that an accurate description of higher-order kinetic moments requires a higher-order phase-space discretization within the AP framework.

%%---------------------------------------------------------------
\section{Conclusion and perspectives}

\begin{del}
    An asymptotic-preserving micro--macro framework has been developed for the
Vlasov--BGK--Poisson system in the quasi-neutral and low-Mach-number regimes.
Decomposing the electron distribution into a local Maxwellian and a microscopic
perturbation combines a macroscopic description of the dominant fluid behavior
with the kinetic corrections carried by the non-equilibrium component. The
formulation remains consistent with the original kinetic model for finite values
of the scaling parameters and degenerates into the appropriate macroscopic
balance as the Mach number and the scaled Debye length vanish.

A semi-implicit discretization handles the stiffness of the electric force, the
pressure contribution, and the Poisson coupling. Its distinctive feature is the
auxiliary-variable reformulation, which rescales the stiff force term and thereby
avoids the amplification of algebraic errors by the singular factor $1/M^2$: this
is what makes the AP property hold along the entire solution chain, including an
iterative linear solver with finite tolerance. An algebraically equivalent
post-processing formulation eliminates the auxiliary variable from the principal
solve, reducing the coupled system while preserving the AP property and offering
a simpler structure for preconditioner design.

The numerical experiments span complementary regimes. Landau damping confirms
that the micro--macro formulation retains the relevant kinetic behavior away from
the singular limits; the low-Mach equilibrium test shows that the AP and AP-post
schemes remain accurate at very small Mach numbers even with inexact iterative
solvers, whereas a direct treatment suffers substantial amplification of solver
errors; plasma expansion into vacuum demonstrates that the method captures the
principal dynamics and that the microscopic correction is necessary to recover
thermal effects beyond an isothermal fluid closure. The temperature results also
expose the influence of phase-space discretization, with low-order interpolation
introducing noticeable numerical diffusion.

Future work will address higher-order, less dissipative phase-space
discretizations and a rigorous fully discrete stability and error analysis
accounting for iterative-solver tolerances, together with efficient block
preconditioners for the reduced system. Extensions to multidimensional and
electromagnetic configurations, and adaptive kinetic treatments for strongly
non-equilibrium regions, are natural next steps: plasma sheaths and
plasma--wall interactions are a prime application, where localized departures
from the low-Mach and quasi-neutral regimes must be resolved without
compromising efficiency elsewhere.
\end{del}

%
%%-------------------------------------------------------------
  \appendix
\section{Proofs of the main results}  \label{app:proof}
\begin{proof}[Proof of Lemma~\ref{lem:ge_estimate_1d}]
In the 1D-1V setting, the microscopic equation \eqref{MMk} reads
\begin{equation} \label{eq:ge_governing_1d}
        \begin{gathered}
    M \partial_t g_e + \frac{M}{\kappa}\nu_{ee}\, g_e
    = \mathcal{T}(g_e) + S_e,
    \\
        \mathcal{T}(g) := -(\mathbb{I}-\Pi_{\mathcal{M}})
                            \big(v\,\partial_x g\big)
                            + \eta E\, \partial_v g , \quad
        S_e := -(\mathbb{I}-\Pi_{\mathcal{M}})
                 \big(v\,\partial_x\mathcal{M}_e\big) .
    \end{gathered}
\end{equation}
Multiplying by $\kappa/M$ gathers the collision term and the perturbations on the left-hand side,
\begin{equation} \label{eq:ge_balance}
    \left( \nu_{ee} + \kappa\, \partial_t
           - \frac{\kappa}{M}\,\mathcal{T} \right) g_e
    = \frac{\kappa}{M}\, S_e .
\end{equation}

The rest of the proof is broken into 3 steps.
\begin{enumerate}
    \item We derive an estimate on the source term $S_e$. Differentiating $\mathcal{M}_e$ with respect to $x$ gives
\begin{equation*}
    v\,\partial_x \mathcal{M}_e
    = \frac{\partial_x n_e}{n_e}\, v\,\mathcal{M}_e
    + M\,\frac{\partial_x u_e}{T_{e,0}}\, v\,(v-Mu_e)\,\mathcal{M}_e .
\end{equation*}
The first term is exactly cancelled by $\mathbb{I}-\Pi_{\mathcal{M}}$: indeed,
$v\,\mathcal{M}_e = \big[(v-Mu_e) + Mu_e\big]\mathcal{M}_e$ belongs to
$\operatorname{Im}(\Pi_{\mathcal{M}})$ as a linear combination of the elements
of the basis $\mathcal{B}$ defined in \eqref{9}.

Writing likewise $v(v-Mu_e) = (v-Mu_e)^2 + Mu_e(v-Mu_e)$ and using the moment identities
$\langle (v-Mu_e)^2\mathcal{M}_e\rangle = n_e T_{e,0}$ and
$\langle (v-Mu_e)^3\mathcal{M}_e\rangle = 0$, we obtain
\begin{equation} \label{eq:source_exact}
    S_e = -M\, \Sigma,
    \quad
    \Sigma := \frac{\partial_x u_e}{T_{e,0}}
              \Big( (v-Mu_e)^2 - T_{e,0} \Big)\mathcal{M}_e
            = \mathcal{O}(1) .
\end{equation}
Consequently the right-hand side of \eqref{eq:ge_balance} equals $-\kappa\,\Sigma = \mathcal{O}(\kappa)$. This estimate is instrumental in deriving the optimal bound for $g_e$.
\item We now derive a bound on $g_e$ by successive substitution, in the spirit of
the iterative closure of the Hilbert expansion introduced in
\cite{Caflisch1980}. To this end, equation \eqref{eq:ge_balance} is recast in the implicit form
\begin{equation} \label{eq:ge_implicit}
    g_e = \frac{1}{\nu_{ee}}
          \left( -\kappa\, \partial_t g_e + \frac{\kappa}{M}\,\mathcal{T}(g_e) - \kappa\, \Sigma \right),
\end{equation}
which expresses $g_e$ in terms of itself. Any available bound may therefore be substituted into the right-hand side to produce a new bound on the left-hand side. This is performed at fixed time and does not involve any time marching.

\begin{enumerate}
\item \emph{First pass.} 
Using the $M$-uniform a priori bounds of assumption~(iii)
to estimate $g_e = \mathcal{O}(1)$, $\partial_t g_e = \mathcal{O}(1)$ and
$\mathcal{T}(g_e) = \mathcal{O}(1)$, \eqref{eq:ge_implicit} yields
\begin{equation} \label{eq:first_pass}
    g_e = \mathcal{O}(\kappa) + \mathcal{O}(\kappa)
        + \mathcal{O}\!\left(\frac{\kappa}{M}\right)
        = \mathcal{O}\!\left(\frac{\kappa}{M}\right),
\end{equation}
the transport contribution ($\mathcal{T}(g_e)$) being dominant at this stage.

Since $\mathcal{T}$ evaluates the derivatives of its argument rather than the
argument itself, the bound \eqref{eq:first_pass} on $g_e$ alone would leave
$\mathcal{T}(g_e)$ unchanged, and a further substitution into
\eqref{eq:ge_implicit} would merely reproduce the same estimate. The
derivatives of $g_e$ must therefore be estimated as well, from their own
equations. Differentiating \eqref{eq:ge_balance} with respect to $x$ and $v$
gives
\begin{align}
    \Big(\nu_{ee}+\kappa\partial_t-\tfrac{\kappa}{M}\mathcal{T}\Big)
    \partial_x g_e
    &= \frac{\kappa}{M}\,\partial_x S_e
       - (\partial_x \nu_{ee})\, g_e
       + \frac{\kappa}{M}\,[\partial_x,\mathcal{T}]\, g_e ,
       \label{eq:dx_equation} \\[2pt]
    \Big(\nu_{ee}+\kappa\partial_t-\tfrac{\kappa}{M}\mathcal{T}\Big)
    \partial_v g_e
    &= \frac{\kappa}{M}\,\partial_v S_e
       + \frac{\kappa}{M}\,[\partial_v,\mathcal{T}]\, g_e ,
       \label{eq:dv_equation}
\end{align}
where $[\partial_\bullet,\mathcal{T}] := \partial_\bullet\circ\mathcal{T} - \mathcal{T}\circ\partial_\bullet$ is operator commutator, with
\begin{align} \label{eq:commutator_x}
    &\begin{aligned}
        [\partial_x,\mathcal{T}]\, g
    &= \left(\partial_x \Liu{\Pi}_{\mathcal{M}}\right) (v \partial_x g) + \eta\, (\partial_x E)\, \partial_v g \\
    &=M \, \frac{\,\partial_x u_e}{T_{e,0}} \left( \frac{(v-Mu_e)^2}{T_{e,0}} - 1 \right)
      \frac{\mathcal{M}_e}{n_e} \big\langle (v-Mu_e)\, v\, \partial_x g \big\rangle\\
      &\quad+ \eta\, (\partial_x E)\, \partial_v g ,
    \end{aligned}\\
    &[\partial_v,\mathcal{T}]\, g =  -(\mathbb{I}-\Pi_{\mathcal{M}})\big(\partial_x g\big) +\Liu{(\partial_v\Pi_{\mathcal{M}})
    \bigl(v\partial_x g\bigr)}.
\end{align}
\Liu{Differentiating the definition of $\Pi_{\mathcal{M}}$ and using the moment
constraints $\langle g_e\rangle=\langle vg_e\rangle=0$, one obtains
\begin{equation*}
    (\partial_v\Pi_{\mathcal{M}})
    \bigl(v\partial_xg_e\bigr)
    =
    \frac{\partial_x\langle v^2g_e\rangle}
         {n_eT_{e,0}}
    \left(
        1-\frac{(v-Mu_e)^2}{T_{e,0}}
    \right)\mathcal{M}_e .
\end{equation*}
}

\Del{This term involves no derivative of $g$ beyond first order and is bounded on $L^\infty_v(\Omega_v)$ under the
uniform regularity assumptions. It is therefore $\mathcal{O}(1)$ in the first pass and, once
$\partial_x g_e = \mathcal{O}(\kappa/M)$ has been established, becomes
$\mathcal{O}(\kappa/M)$; after multiplication by $\kappa/M$ in
\eqref{eq:dv_equation} it contributes only $\mathcal{O}(\kappa^2/M^2) =
o(\kappa)$ and does not affect the final estimate.}

 Both equations \Del{\eqref{eq:dx_equation}--\eqref{eq:dv_equation} are governed by the same operator as equation} \eqref{eq:ge_balance}, and their right-hand sides involve only the differentiated source $S_e$, which is $\mathcal{O}(M)$ by \eqref{eq:source_exact}, together with lower-order derivatives of $g_e$. Estimating them in turn, the hierarchy being processed upwards from $g_e$, the first pass gives
\begin{equation} \label{eq:first_pass_der}
    \partial_x g_e = \mathcal{O}\!\left(\frac{\kappa}{M}\right),
    \quad
    \partial_v g_e = \mathcal{O}\!\left(\frac{\kappa}{M}\right).
\end{equation}
These estimates are sufficient to obtain the optimal bound on $g_e$ thanks to a second pass.

\item \emph{Second pass.} Substituting \eqref{eq:first_pass_der} back into
\eqref{eq:ge_implicit}, the transport contribution now reads
\begin{equation*}
    \frac{\kappa}{M}\,\mathcal{T}(g_e) = \mathcal{O}\!\left(\frac{\kappa^2}{M^2}\right) = o(\kappa),
\end{equation*}
by the coupled scaling $\kappa/M^2 \to 0$: it has become negligible with
respect to the source. The remaining unsteady contribution
$\kappa\,\partial_t g_e$ is $\mathcal{O}(\kappa)$, of the same order as the
source, so that
\begin{equation} \label{eq:ge_order_kappa}
    g_e = \mathcal{O}(\kappa) .
\end{equation}
The optimal bound for $g_e$ is therefore obtained after two passes, since it cannot be improved due to the source term $S_e$ and the unsteady operator $\kappa\,\partial_t g_e$  contributions.
\end{enumerate}

Repeating the same substitution in \eqref{eq:dx_equation}, where the commutator
term is likewise reduced to
$\frac{\kappa}{M}\,\mathcal{O}(\kappa/M) = o(\kappa)$ and the term
$-(\partial_x\nu_{ee})g_e$ is $\mathcal{O}(\kappa)$ by
\eqref{eq:ge_order_kappa}, we obtain
\begin{equation} \label{eq:dxge_order_kappa}
    \partial_x g_e = \mathcal{O}(\kappa)
    \quad \text{uniformly in } L^\infty_v(\Omega_v).
\end{equation}
Assumption~(iv) guarantees that this
quasi-stationary balance also holds at $t = 0$: for data that are not well
prepared, the homogeneous dynamics
$\partial_t g_e = -\nu_{ee}\,g_e/\kappa$ would first relax $g_e$ towards
$\mathcal{O}(\kappa)$ over the short time $t \sim \kappa/\nu_{ee}$, and
\eqref{eq:ge_order_kappa} would only hold outside that initial layer.
\item \Del{The optimal bound on the spatial derivative of the kinetic pressure can
now be deduced. Unlike the crude estimates used in the previous step, where
$\partial_x\langle v^2 g_e\rangle$ only had to be controlled well enough to
close the commutator contributions, we now seek the sharp order of this
quantity, which is the conclusion of the lemma.}
Since differentiation in $x$ commutes with integration in $v$, and
$\Omega_v = [-V_{\max}, V_{\max}]$ is bounded, owing to equation~\eqref{eq:dxge_order_kappa}
\begin{equation*}
    \left| \partial_x \langle v^2 g_e \rangle \right|
    = \left| \int_{\Omega_v} v^2\, \partial_x g_e \, dv \right|
    \le 2 V_{\max}^3\,
        \|\partial_x g_e\|_{L^\infty_v(\Omega_v)}
    = \mathcal{O}(\kappa)
\end{equation*}
\end{enumerate}

Under the coupled scaling $\kappa = o(M^2)$ we
finally obtain $\partial_x \langle v^2 g_e \rangle = o(M^2)$, which completes
the proof.
\end{proof}

\begin{proof}[Proof of Proposition~\ref{prop2.4}] \label{prop2.4pf}
We establish sequentially the exact equivalence for $M > 0$, the formal limit $M \to 0$, and the structural non-degeneracy.
\begin{enumerate}
    \item \emph{Equivalence for $M > 0$ under initial compatibility \eqref{eq:aug:compat}:}
Taking the spatial derivative $\partial_x$ of the momentum equation \eqref{eq:aug:2} yields:
\begin{equation} \label{eq:pf:dt_flux}
    \partial_x \big( \partial_t (n_e u_e) \big) = -\partial^2_{xx} \big( n_e u_e^2 + L \big).
\end{equation}
Substituting relation \eqref{eq:pf:dt_flux} into the density wave equation \eqref{eq:aug:wave} leads to:
\begin{equation*}
    \partial^2_{tt} n_e + \partial_x \big( \partial_t (n_e u_e) \big) = 0 \iff \partial_t \Big( \partial_t n_e + \partial_x (n_e u_e) \Big) = 0,
\end{equation*}
%
%Integrating this relation with respect to time $t$ gives:
%\begin{equation*}
%    \partial_t n_e(t,x) + \partial_x \big( n_e(t,x) u_e(t,x) \big) = C(x),
%\end{equation*}
%where $C(x)$ is an arbitrary time-independent integration function. Evaluating this identity at $t = 0$ and applying 
%\begin{equation*}
%    C(x) = \left. \partial_t n_e \right|_{t=0} + \partial_x \big( n_e^0 u_e^0 \big) = 0.
%\end{equation*}
%Thus, $C(x) \equiv 0$, and the exact mass conservation equation $\partial_t n_e + \partial_x (n_e u_e) = 0$ is recovered for all $t \ge 0$. Next, substituting $M^2 \partial_x L$ from \eqref{eq:aug:3} into \eqref{eq:aug:2} and multiplying by $M^2$ recovers the exact original momentum balance equation:
which, thanks to the initial compatibility condition \eqref{eq:aug:compat} and substituting $M^2 \partial_x L$ from \eqref{eq:aug:3} into \eqref{eq:aug:2} and multiplying by $M^2$ recovers the exact original momentum balance equation:
\begin{equation*}
    M^2 \Big( \partial_t (n_e u_e) + \partial_x (n_e u_e^2) \Big) + T_{e,0} \partial_x n_e - \eta n_e \partial_x \phi + \partial_x \langle v^2 g_e \rangle = 0.
\end{equation*}
This establishes strict equivalence with the original micro--macro model for all $M > 0$.
\item \emph{Limit $M \to 0$ and equivalence with Proposition~\ref{prop2.3}:}
We inject the Hilbert expansions \eqref{eq:hilbert_expansions} and $L = \sum_{k=0}^\infty (M^2)^k L_k$ into system \eqref{eq:aug:wave}--\eqref{eq:aug:5}. Lemma~\ref{lem:ge_estimate_1d} ensures $\partial_x \langle v^2 g_e \rangle = o(M^2)$.
At order $\mathcal{O}(1)$ in \eqref{eq:aug:3} we obtain $T_{e,0} \partial_x n_{e,0} - \eta n_{e,0} \partial_x \phi_0 = 0$, which is the Boltzmann equilibrium relation \eqref{eq:M0:2}.

Identifying terms of order $M^2$  in equation~\eqref{eq:aug:3} gives $\partial_x L_0 = T_{e,0} \partial_x n_{e,1} - \eta \big( n_{e,0} \partial_x \phi_1 + n_{e,1} \partial_x \phi_0 \big)$. 
Now considering order $\mathcal{O}(1)$ in \eqref{eq:aug:wave}--\eqref{eq:aug:2}: Taking $M \to 0$ yields the limit wave system:
    \begin{equation*}
        \partial_{tt} n_{e,0} - \partial_{xx} \big( n_{e,0} u_{e,0}^2 + L_0 \big) = 0, \quad \partial_t (n_{e,0} u_{e,0}) + \partial_x (n_{e,0} u_{e,0}^2) + \partial_x L_0 = 0.
    \end{equation*}
    This pair combined with the leading-order compatibility condition $\left. \partial_t n_{e,0} \right|_{t=0} = -\partial_x (n_{e,0}^0 u_{e,0}^0)$ is strictly equivalent to the coupled continuity and momentum limit equations \eqref{eq:M0:1} and \eqref{eq:momentum:Hilbert} of Proposition~\ref{prop2.3}.
\item \Del{\emph{Non-degeneracy $M \ge 0$ and $\lambda \ge 0$:}} Formally taking the limit $M \to 0$ in the augmented micro--macro system yields the following closed set of equations:
\begin{align}
    &-\partial^2_{xx} L = -\partial^2_{tt} n_e +\partial^2_{xx} \big( n_e u_e^2 \big), \label{eq:aug:wave:0}\\
    &\partial_t (n_e u_e) + \partial_x (n_e u_e^2) + \partial_x L = 0, \label{eq:aug:2:0}\\
    &- \eta \partial_x \left( n_e \partial_x \phi\right) = T_{e,0} \partial^2_{xx} n_e, \label{eq:aug:3:0}\\
    &g_e = 0, \label{eq:aug:4:0}\\
    &n_e = n_i + \lambda^2 \eta \partial^2_{xx} \phi. \label{eq:aug:5:0}
\end{align}
In this system, equation \eqref{eq:aug:3:0} is obtained by differentiating the spatial Boltzmann relation with respect to $x$ in order to formulate a second-order elliptic equation for the potential or density. This modification is proposed on purpose to facilitate the numerical treatment of this augmented system. 

Unlike the original formulation, the momentum equation \eqref{eq:aug:2:0} remains fully evolutionary, allowing the momentum $n_e u_e$ to be computed dynamically. The kinetic deviation $g_e$ vanishes identically according to \eqref{eq:aug:4:0}, while the auxiliary field $L$ is uniquely determined by solving the spatial elliptic equation \eqref{eq:aug:wave:0}.

Regarding the electron density $n_e$ and the electric potential $\phi$, two distinct frameworks are naturally covered depending on the scaled Debye length $\lambda$:\\
\emph{Non-quasi-neutral regime ($\lambda > 0$):} The electric potential $\phi$ is determined by solving Poisson equation \eqref{eq:aug:5:0}, while the density $n_e$ is evolved dynamically through the wave formulation \eqref{eq:aug:wave:0}--\eqref{eq:aug:2:0}.\\
\emph{Quasi-neutral regime ($\lambda = 0$):} Poisson equation \eqref{eq:aug:5:0} degenerates into the algebraic quasi-neutral constraint $n_e = n_i$, which directly yields the density, whereas the potential $\phi$ is retrieved by solving the elliptic equation \eqref{eq:aug:3:0}.

Note that this non-singular behavior in the quasi-neutral limit holds uniformly for $M > 0$ as well. Consequently, the augmented micro--macro system is uniformly well-posed in both the low-Mach ($M \to 0$) and quasi-neutral ($\lambda \to 0$) limits.
\end{enumerate}
\end{proof}

\bibliographystyle{abbrv}
\bibliography{references}
\end{document}